\documentclass{amsart}
\usepackage{amssymb,amsmath,amscd,amsthm}
\usepackage{amsfonts,epsfig,latexsym,graphicx,amssymb}

\date{\today}

\newcommand{\Z}{{\mathbb Z}}
\newcommand{\R}{{\mathbb R}}
\newcommand{\C}{{\mathbb C}}

\newcommand{\T}{{\mathbb T}}
\newcommand{\Q}{{\mathbb Q}}

\newtheorem{theorem}{Theorem}[section]
\newtheorem{lemma}[theorem]{Lemma}
\newtheorem{prop}[theorem]{Proposition}

\theoremstyle{definition}
\newtheorem{remark}[theorem]{Remark}
\newtheorem{question}[theorem]{Question}

\sloppy



\newcommand{\E}{{\mathbb E}\,}

\def\be{\begin{equation}}
\def\ee{\end{equation}}

\renewcommand{\includegraphics}[2][]{}

\begin{document}


\title[An Extension of the Kunz-Soulliard Approach to Localization]{An Extension of the Kunz-Souillard Approach to Localization in One Dimension and Applications to Almost-Periodic Schr\"odinger Operators}

\author[D.\ Damanik]{David Damanik}

\address{Department of Mathematics, Rice University, Houston, TX~77005, USA}

\email{damanik@rice.edu}

\thanks{D.\ D.\ was supported in part by NSF grant DMS--1361625.}

\author[A.\ Gorodetski]{Anton Gorodetski}

\address{Department of Mathematics, University of California, Irvine, CA~92697, USA}

\email{asgor@math.uci.edu}

\thanks{A.\ G.\ was supported in part by NSF grant DMS--1301515. }

\begin{abstract}
We generalize the approach to localization in one dimension introduced by Kunz-Souillard, and refined by Delyon-Kunz-Souillard and Simon, in the early 1980's in such a way that certain correlations are allowed. Several applications of this generalized Kunz-Souillard method to almost periodic Schr\"odinger operators are presented.

On the one hand, we show that the Schr\"odinger operators on $\ell^2(\Z)$ with limit-periodic potential that have pure point spectrum form a dense subset in the space of all limit-periodic Schr\"odinger operators on $\ell^2(\Z)$. More generally, for any bounded potential, one can find an arbitrarily small limit-periodic perturbation so that the resulting operator has pure point spectrum. Our result complements the known denseness of absolutely continuous spectrum and the known genericity of singular continuous spectrum in the space of all limit-periodic Schr\"odinger operators on $\ell^2(\Z)$.

On the other hand, we show that Schr\"odinger operators on $\ell^2(\Z)$ with arbitrarily small one-frequency quasi-periodic potential may have pure point spectrum for some phases. This was previously known only for one-frequency quasi-periodic potentials with $\|\cdot\|_\infty$ norm exceeding $2$, namely the super-critical almost Mathieu operator with a typical frequency and phase. Moreover, this phenomenon can occur for any frequency, whereas no previous quasi-periodic potential with Liouville frequency was known that may admit eigenvalues for any phase.
\end{abstract}

\maketitle

\section{Introduction}\label{s.intro}

We consider Schr\"odinger operators $H$ acting on $\ell^2(\Z)$ via
\begin{equation}\label{e.oper}
[H \psi](n) = \psi(n+1) + \psi(n-1) + V(n) \psi(n).
\end{equation}
We will assume throughout that the potential $V : \Z \to \R$ is bounded. In this case the operator $H$ is bounded and self-adjoint and hence its spectrum $\sigma(H)$ is a compact subset of $\R$ and, for every $\psi \in \ell^2(\Z)$, there is an associated spectral measure $d\rho = d\rho_{H,\psi}$ such that $\langle \psi, g(H) \psi \rangle = \int g \, d\rho$ for bounded measurable functions $g : \R \to \C$. One says that $H$ has purely absolutely continuous spectrum (resp., purely singular continuous spectrum, or pure point spectrum) if all spectral measures are absolutely continuous (resp., singular continuous, or pure point). The basic spectral analysis of the operator $H$ consists of identifying or describing the spectrum and determining the type of the spectral measures.

Ergodic potentials are of significant interest from the point of view of solid state physics. The extreme examples are given by periodic and random potentials. In between these extremes, almost periodic potentials have also been studied extensively. A potential $V$ is called periodic if there is $p \in \Z_+$ (the period) such that for all $n \in \Z$, we have $V(n+p) = V(n)$. A random potential is given by a sequence of independent identically distributed random variables. Formally, one may choose a non-degenerate probability measure $\nu$ on $\R$ with compact support $S \subset \R$, set $\Omega = S^\Z$, and form the product measure $\mu = \nu^\Z$. Then for every $\omega \in \Omega$, we consider the potential $V_\omega : \Z \to \R$ given by $V_\omega(n) = \omega_n$ and the associated operator $H_\omega$. Finally, almost periodic potentials $V$ are those whose translates $V_m$, $m \in \Z$, defined by $V_m(n) = V(n-m)$ form a relatively compact subset of $\ell^\infty(\Z)$. Here one also naturally forms $\Omega = \overline{\{V_m : m \in \Z\}}^{\ell^\infty(\Z)}$ and considers an associated family of operators $\{H_\omega\}_{\omega \in \Omega}$. There are two important subclasses of almost periodic potentials, namely those that are limit-periodic (i.e., those that are uniform limits of periodic potentials) and those that are quasi-periodic (i.e., when $\Omega$ is isomorphic to a closed subgroup of a finite-dimensional torus, i.e., to the direct sum of a finite-dimensional torus with a finite abelian group).

The spectral theory of these classes of Schr\"odinger operators has been reviewed recently in \cite{D15}. Let us mention some highlights. In the case of a periodic potential of period $p$, the spectrum is always a finite union of at most $p$ compact intervals. In the random case, the spectrum is the same set for $\mu$-almost every $\omega \in \Omega$, and this almost sure spectrum is again a finite union of compact intervals. In the almost periodic case, the spectrum is typically a Cantor set, in particular, it is nowhere dense. The spectral measures are purely absolutely continuous in the periodic case, they are for $\mu$-almost every $\omega \in \Omega$ pure point in the random case, and every one of the three basic spectral types can arise for almost periodic potentials. In fact, one can observe all three basic spectral types already within each of the classes of limit-periodic potentials as well as quasi-periodic potentials.

\subsection{Limit-Periodic Potentials}
In this paper we are especially interested in Schr\"odinger operators with limit-periodic potentials; see, for example, \cite{A09, DG10, DG11, DGa11, DLY15, F15, KG11} for recent progress in this area. It was already known that for a dense set of limit-periodic potentials, the spectrum is purely absolutely continuous, while for a generic (i.e., a dense $G_\delta$) set of limit-periodic potentials, the spectrum is purely singular continuous. Up until now it had been an intriguing open question whether pure point spectrum is also a dense phenomenon in the class of limit-periodic potentials. Note that if this is true, this would be the most surprising of the three statements, and likely the hardest to prove. This is because by their very definition, limit-periodic potentials are viewed as limits of a sequence of periodic potentials, and, as was pointed out above, periodic potentials always lead to purely absolutely continuous spectrum. Thus the natural inclination is to push this spectral type through to the limit, and this is precisely what the early papers on these operators sought to accomplish. It was realized only recently that the generic behavior, however, is purely singular continuous spectrum. Point spectrum, on the other hand, is farthest from the ``natural'' absolutely continuous spectral type, and hence showing that it occurs on a dense set should be regarded as surprising, and indeed proving it must use something other than mere periodic approximation.

In this paper we will answer this question which arose naturally from the known results, and which was asked in print on several occasions (e.g., \cite{D15, DGa11}).

\begin{theorem}\label{t.main1}
For every limit-periodic $V$ and every $\varepsilon > 0$, there is a limit-periodic $\tilde V$ with $\|V - \tilde V\|_\infty < \varepsilon$ such that the Schr\"odinger operator with potential $\tilde V$ has pure point spectrum.
\end{theorem}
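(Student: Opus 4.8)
The plan is to realize $\tilde V$ as a typical sample of a carefully designed random limit-periodic potential, and then to invoke the generalized Kunz--Souillard localization theorem that forms the technical core of the paper. First I would fix a periodic potential $V_0$ of some period $p_0$ with $\|V - V_0\|_\infty < \varepsilon/2$ (possible since $V$ is limit-periodic), together with a sequence of periods $p_0 \mid p_1 \mid p_2 \mid \cdots$ and amplitudes $\varepsilon_k > 0$ with $\sum_{k \ge 1} \varepsilon_k < \varepsilon/2$. At each scale $k \ge 1$ I would add a perturbation $a_k$ that is $p_k$-periodic and \emph{constant on each of the $m_k := p_k/p_{k-1}$ consecutive blocks of length $p_{k-1}$ inside a period}, with the $m_k$ block-values chosen independent with a fixed smooth density supported in $[-\varepsilon_k,\varepsilon_k]$. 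Writing $\omega = (\omega_k)_{k\ge 1}$ for the resulting randomness and $\mu = \bigotimes_{k\ge 1}\mu_k$ for the product measure, the potential $V_\omega := V_0 + \sum_{k\ge 1} a_k$ is, for \emph{every} $\omega$, a uniform limit of periodic potentials, hence limit-periodic, and $\|V - V_\omega\|_\infty < \varepsilon$. Thus it suffices to show that the operator \eqref{e.oper} with potential $V_\omega$, call it $H_\omega$, has pure point spectrum for $\mu$-a.e.\ $\omega$; choosing any such $\omega$ and setting $\tilde V = V_\omega$ then finishes the proof.

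The point of this construction is that it displays exactly the correlated structure the generalized Kunz--Souillard method is designed to tolerate. Fix a large scale $K$ and examine the potential on a fundamental domain $[0,p_K)$: it equals the $p_{K-1}$-periodic profile $W_{K-1} := V_0 + \sum_{k=1}^{K-1} a_k$ repeated over $m_K$ consecutive cells of length $p_{K-1}$, with the values on the $j$-th cell perturbed by the independent scalar $X_{K,j}$, plus a tail $R_K := \sum_{k>K} a_k$ satisfying $\|R_K\|_\infty \le \sum_{k>K}\varepsilon_k \to 0$. So, at resolution $p_{K-1}$, the potential is precisely the classical Kunz--Souillard i.i.d.\ ensemble, except that each ``site'' is now a length-$p_{K-1}$ cell carrying the fixed profile $W_{K-1}$ shifted by an i.i.d.\ additive constant: this is the class of correlations the earlier sections permit, the one-step transfer matrix being replaced by the $p_{K-1}$-step transfer matrix of $W_{K-1}$ viewed as a random $SL(2,\R)$-valued function of the i.i.d.\ shift.

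With the setup in place I would apply the generalized Kunz--Souillard theorem to the family $\{H_\omega\}$. This requires checking its hypotheses: smoothness / finite-entropy type conditions on the single-scale densities (as in Delyon--Kunz--Souillard and Simon), and a non-degeneracy of the map sending an i.i.d.\ shift to the corresponding cell transfer matrix, uniformly over the relevant energy interval. Granting these, one concludes that $H_\omega$ has pure point spectrum for $\mu$-a.e.\ $\omega$. Tuning the free parameters belongs to this step: the $\varepsilon_k$ must decay and the ratios $m_k$ must grow fast enough that each freshly introduced scale supplies enough independent cells for the transfer-operator (Markov) contraction to dominate the influence of all finer scales and of the uniformly small tail $R_K$.

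The main obstacle is precisely this last verification: running the Kunz--Souillard machinery in the presence of the correlations and the infinite hierarchy of scales rather than a single i.i.d.\ layer. Two issues have to be dispatched. (i) One must show that the averaging operator built from the cell transfer matrices — the analogue of the operator on an $L^2$ space whose spectral gap drives the classical argument — remains positivity-improving / irreducible; this needs the cell transfer matrix to depend on the i.i.d.\ shift in a sufficiently non-constant way and, crucially, that no conspiracy between the periodic profile $W_{K-1}$ and the energy produces resonances that persist as $K\to\infty$. (ii) One must control the tail $R_K$ uniformly so that the localization estimates obtained at resolution $p_{K-1}$ survive the passage to the genuine operator $H_\omega$. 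Everything else — the approximation of $V$ by $V_0$, the limit-periodicity of $V_\omega$, and the extraction of a single good sample — is routine.
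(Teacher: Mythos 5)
Your high-level strategy --- realize $\tilde V$ as a typical sample of a hierarchical random limit-periodic perturbation of a periodic approximant and invoke the generalized Kunz--Souillard theorem --- is the correct one, and the tail control and limit-periodicity observations are fine. But the central construction has a flaw that is fatal for the method you intend to apply: you make $a_k$ constant on blocks of length $p_{k-1}$, and since $p_0 \mid p_1 \mid \cdots$, every block of length $p_0$ lies inside a single block at each scale $k \ge 1$. So within any block of length $p_0$ your entire perturbation $\sum_{k\ge 1} a_k$ is a \emph{constant}, carrying no site-to-site randomness at all. The Kunz--Souillard method, and its generalizations proved as Theorems~\ref{t.bs} and~\ref{t.general}, integrate out an independent random variable at \emph{each} site; the permitted ``correlations'' are a bounded background $\chi_n$ and linear functionals $\mathcal L_n$ of the $\xi$'s at sites strictly closer to the origin, but every site must still carry its own fresh $\xi_n$ (or $\xi_{n,|m(n)|}$). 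Block-constant randomness breaks this at every interior site of a block, so Theorems~\ref{t.bs}/\ref{t.general} simply do not apply. Your proposed remedy --- treating a length-$p_{K-1}$ cell as a single ``site'' whose transfer matrix depends polynomially on one random parameter --- would require a genuinely new block version of Kunz--Souillard (new kernel estimates, a non-degeneracy/resonance analysis, and uniformity as the cell length tends to infinity); this is not developed in the paper and is not, as you assert, ``the class of correlations the earlier sections permit.''

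The paper's actual proof (of Theorem~\ref{t.main2}, from which Theorem~\ref{t.main1} is immediate) avoids this issue by putting fresh per-site randomness at every site. It partitions $\mathbb Z$ into growing intervals $I_m$ via Proposition~\ref{p.sequences1}, assigns to each site $n$ a collection $\{\xi_{n,k}\}_{k\ge |m(n)|}$ of independent per-site variables of amplitude $\varepsilon_k$, and uses the linear functionals $\mathcal L_{n,k}$ precisely to implement the periodic repetition (with period $2n_k+1$) of the earlier-scale variables from the core $[-n_k,n_k]$. Every site retains a fresh variable of amplitude $\varepsilon_{|m(n)|}$, so Theorem~\ref{t.general} applies, while limit-periodicity follows because truncating the scale sum at $K$ leaves a $(2n_K+1)$-periodic potential plus a tail of size $\sum_{k>K}\varepsilon_k$. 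If you replace your block length $p_{k-1}$ by $1$ (independent per-site values inside each period of $a_k$) you essentially recover the paper's construction, and the argument then goes through.
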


We see that not only are all three basic spectrum types possible in the limit-periodic context, but each of them occurs on a dense set!

We would like to mention the paper \cite{MC84} by Molchanov and Chulaevsky which considered continuum limit-periodic Schr\"odinger operators and states a theorem that includes a statement like the one in Theorem~\ref{t.main1}. Unfortunately, it does not contain a proof of this result, and, to the best of our knowledge, a proof has never appeared in print. Since the result is important in light of the discussion above, we felt compelled to work out a detailed proof of it. Given the discussion in \cite{MC84} of their theorem it is quite clear that the strategy we use to prove Theorem~\ref{t.main1} is different from what is envisioned in \cite{MC84}.\footnote{We use the Kunz-Souillard method while Molchanov and Chulaevsky seem to rely on F\"urstenberg's theorem. It is not clear to us whether the latter strategy can easily be turned into a proof, as there appear to be several serious technical obstacles (such as quantitative control of transfer matrix growth, and the matching of the two decaying half-line solutions corresponding to the Oseledets directions). The Kunz-Souillard theory for the continuum setting (which was considered in \cite{MC84}) was fully developed only in 2011 \cite{DS11}.}

How does one go about proving such a result? As indicated above, periodic potentials lead to absolutely continuous spectrum, random potentials lead to pure point spectrum, and limit-periodic potentials are obtained as limits of periodic potentials. The fundamental idea is to use randomness in the approximation. Of course, one still needs periodic approximants, and hence one can only use randomness on finite intervals, which will then have to be repeated periodically. Thus, on the finite regions where one has randomness, one wants to build up signatures of localization that survive in the limit. Since one has a sequence of periodic approximants, one can take larger and larger intervals on which one has randomness, and hence exhaust space in this way. Of course, the strength of the added randomness has to decrease through this sequence sufficiently fast to ensure convergence of the periodic approximants. It is thus a combination of local randomness and global periodicity that ensures success, the former allowing for pure point spectrum and the latter for limit-periodicity.

Our proof is such that we can perturb arbitrary bounded potentials $V$, not only limit-periodic ones. Thus, the following strengthening of Theorem~\ref{t.main1} holds.

\begin{theorem}\label{t.main2}
For every bounded $V$ and every $\varepsilon > 0$, there is a limit-periodic potential $V_\mathrm{lp}$ with $\|V_\mathrm{lp}\|_\infty < \varepsilon$ such that the Schr\"odinger operator with potential $V + V_\mathrm{lp}$ has pure point spectrum.
\end{theorem}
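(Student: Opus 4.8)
The plan is to prove Theorem~\ref{t.main2}, from which Theorem~\ref{t.main1} follows immediately (take $\tilde V = V + V_\mathrm{lp}$). The strategy, as foreshadowed in the introduction, is to construct $V_\mathrm{lp}$ as a uniformly convergent sum $V_\mathrm{lp} = \sum_{k \ge 1} W_k$, where each $W_k$ is periodic with period $p_k$, the periods form a divisibility chain $p_1 \mid p_2 \mid p_3 \mid \cdots$ (so that all partial sums, and the total sum, are limit-periodic), and $\|W_k\|_\infty$ decays fast enough along a subsequence that $\sum_k \|W_k\|_\infty < \eps$. The key point is that on the fundamental domain $[1, p_k]$ of the $k$-th scale, the perturbation $W_k$ is taken to be \emph{random} — a collection of independent random variables with an absolutely continuous single-site distribution supported in a tiny interval around $0$ — while outside that block (within the period, which is then repeated) $W_k$ vanishes. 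As $k \to \infty$, the random blocks exhaust $\Z$. One must be careful that the blocks are nested/aligned correctly with the divisibility chain so that the randomness introduced at scale $k$ is not overwritten at later scales; this is arranged by having $W_{k+1}$ be supported, within its period $p_{k+1}$, on the complement of the already-randomized region, or equivalently by a careful bookkeeping of which sites are ``frozen'' after stage $k$.

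The core analytic input is the generalized Kunz--Souillard machinery developed in the body of the paper (the ``extension ... in such a way that certain correlations are allowed''). The classical Kunz--Souillard method shows that for a half-line or whole-line Schr\"odinger operator whose potential values are i.i.d.\ with an absolutely continuous distribution of suitable regularity, the operator has pure point spectrum almost surely, by establishing contraction properties of an associated transfer operator acting on an $L^2$ space of functions of the Pr\"ufer-type variable. In our situation the potential is \emph{not} i.i.d.: it is a deterministic background $V$ plus a sum of independent but block-structured, periodically-repeated random perturbations of decaying amplitude. The generalized method must accommodate (i) a non-random deterministic part of the potential, (ii) the fact that randomness is only turned on at a fixed positive density of scales with the amplitude going to zero, and (iii) the periodic repetition structure, which introduces exactly the ``correlations'' the abstract theory is built to handle. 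The upshot one needs is: for \emph{almost every} choice of all the random variables $\{W_k\}$, the resulting whole-line operator $H_{V + V_\mathrm{lp}}$ has pure point spectrum. Since the amplitude bound $\sum_k \|W_k\|_\infty < \eps$ holds deterministically (the random variables are supported in intervals we choose as small as we like), a positive-measure — hence nonempty — set of choices gives both $\|V_\mathrm{lp}\|_\infty < \eps$ and pure point spectrum, which proves the theorem.

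In more detail, the steps I would carry out are as follows. First, fix a sequence of ``activation scales'' and a sequence of amplitudes $\delta_k \downarrow 0$ with $\sum_k \delta_k < \eps$; let the period $p_k$ grow along this sequence with $p_k \mid p_{k+1}$, and let the $k$-th random block be an interval $B_k$ of length comparable to $p_k$, with the $B_k$ chosen so that $\bigcup_k B_k = \Z$ (possible because the blocks grow) and so that the sites randomized at stage $k$ remain untouched at all later stages. Second, define $W_k$ to be $p_k$-periodic, equal on $B_k$ to an array of i.i.d.\ random variables with common distribution $\delta_k \cdot \mathrm{(fixed absolutely continuous density)}$, and zero elsewhere on the period. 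Third, invoke the generalized Kunz--Souillard theorem from the body of the paper: its hypotheses are designed precisely to cover potentials of the form ``bounded deterministic background plus independent, block-correlated, decaying-amplitude randomness,'' and its conclusion is that almost surely every spectral measure of $H_{V+V_\mathrm{lp}}$ is pure point. Fourth, observe that $V_\mathrm{lp}$ is limit-periodic (uniform limit of the periodic partial sums $\sum_{j \le k} W_j$, using the divisibility of periods and $\sum \delta_k < \eps$) and $\|V_\mathrm{lp}\|_\infty \le \sum_k \delta_k < \eps$ with probability one; intersecting with the full-measure localization event and picking any point in it finishes the proof.

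The main obstacle — and the real content of the paper — is the third step: verifying that the Kunz--Souillard transfer-operator contraction survives in this non-i.i.d., multi-scale setting. Two specific difficulties stand out. (a) \emph{Decaying amplitude}: as $\delta_k \to 0$, the single-site distribution concentrates, so the ``smoothing'' that drives the contraction in the classical argument becomes weaker and weaker at large scales; one must show the contraction estimates are uniform enough, or compensate with the increasing block lengths, so that the Lyapunov-exponent positivity and the requisite trace-class/Hilbert--Schmidt bounds still yield pure point spectrum globally rather than only in a bounded energy window that shrinks. (b) \emph{Periodic repetition and the deterministic background}: the transfer matrix over one period is a product of a long deterministic string (governed by $V$ and the already-frozen earlier $W_j$'s) and a short random string (the fresh $W_k$ on $B_k$); one must propagate the contraction through the deterministic part without losing it, which is exactly where the ``correlations are allowed'' generalization does its work — essentially one studies the iterated transfer operator over a full period and shows it is still a strict contraction on the relevant $L^2$ space. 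Granting the abstract theorem proved earlier in the paper, however, the construction above is the natural way to feed it the right input, and the remaining verifications (limit-periodicity, the norm bound) are routine.
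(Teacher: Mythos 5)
Your proposal is essentially the paper's strategy: write $V_\mathrm{lp}$ as a single uniformly convergent sum of periodic, block-random potentials whose periods form a divisibility chain and whose amplitudes are summable, and then apply the generalized Kunz--Souillard theorem (Theorem~\ref{t.general}) \emph{once}, in a single sweep over a partition of $\Z$ adapted to the scales, rather than iterating the classical estimate scale by scale. Two remarks on the details. First, the paper does not make $W_k$ vanish on the previously-randomized region: the scale-$k$ contribution is i.i.d.\ random on the entire central block $[-n_k,n_k]$ and then extended $(2n_k+1)$-periodically, so that later scales ``re-randomize'' the earlier blocks; in the proof of Theorem~\ref{t.general} those extra fresh variables $\xi_{n,k}$ with $k>|m(n)|$ are simply frozen into the background. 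Your ``zero on the already-randomized region'' variant is a legitimate, perhaps more economical, alternative, but note that the periodic copies then accumulate into a sum $\sum_{j<k(n)}W_j(n)$ of unboundedly many coordinate projections, which is exactly why one needs the Theorem~\ref{t.general} formulation (many functionals $\mathcal L_{n,k}$, each with uniformly bounded $\|\cdot\|_+$) rather than the single-functional hypothesis of Theorem~\ref{t.bs}. Second, your description of the ``main obstacle'' mislocates the mechanism: the Kunz--Souillard method does not use positivity of the Lyapunov exponent, trace-class/Hilbert--Schmidt bounds, or an iterated transfer operator over a full period; it proceeds site by site via the $\|S\|_{1,1}\le1$, $\|S\|_{1,2}\lesssim a_j^{-1/2}$ and two-site $\|TT\|_{2,2}$ contraction estimates of Lemma~\ref{l.ks4}, and the only substantive thing to check once Theorem~\ref{t.general} is granted is the summability relating the amplitude decay to the block growth, i.e.\ $\sum_k \varepsilon_k^{-5/2}e^{-\gamma n_{k-1}\varepsilon_k^2}<\infty$ (Proposition~\ref{p.sequences1}), which you gesture at (``compensate with the increasing block lengths'') but do not pin down. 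Since you invoke the abstract theorem as a black box, neither remark is a gap in your argument, but be aware that the internal picture you give of how the black box works is not the one the paper uses.
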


As a result this is interesting, as up to now only random potentials were known to dominate any given background potential in this way. That is, it was known that an arbitrarily small random perturbation can turn a bounded Schr\"odinger operator into one with pure point spectrum. Theorem~\ref{t.main2} shows that this already holds with limit-periodic perturbations. Admittedly, the reason behind this phenomenon still has to do with randomness, but this happens on a local scale, while on a global scale we have turned the perturbations into ones that are far more ordered.

To implement and study local randomness we will employ a scheme that was originally developed by Kunz and Souillard in \cite{KS80}. It was refined and extended by Delyon, Kunz and Souillard \cite{DKS83} and Simon \cite{S82}. It is a rather unique approach, as it aims at dynamical localization, from which spectral localization (pure point spectrum with suitable eigenfunction decay estimates) is then deduced. Most other approaches to localization (especially in the context of almost periodic potentials, which is of primary interest in this paper) first establish spectral localization and then, with additional work, dynamical localization. Another feature of this approach, one that is particularly important for our purpose, is that it does not make any use of ergodicity. This allows one to add an arbitrary background potential. Moreover, the method is such that it processes randomness by going from site to site. It is thus by its very nature a local method that produces estimates step by step as it proceeds through an interval of randomness. Finally, it is an important feature but somewhat irrelevant for our proof that the identical distribution of the random variables is absolutely unnecessary for this method. This again is in stark contrast to all other methods of studying random potentials. Here we will content ourselves with rescaled versions of the same single-site distribution and hence we will not make full use of this particular feature of the method.

At first glance this suggests the following approach. Start with a background potential. Add a small random potential, but take only a finite portion of it, which is then repeated periodically. Process this finite interval of randomness via the Kunz-Souillard method and produce estimates. Add an even smaller random potential, but cut out an even longer piece, which is then repeated periodically. Process this finite interval of randomness via the Kunz-Souillard method and produce estimates. Continue in this way, making sure that the sum of the $\| \cdot \|_\infty$ norms of the perturbations add up to less than $\varepsilon$.

A straightforward attempt to implement the approach described in the previous paragraph fails, as there is too little control over the estimates one obtains from step to step so that one is left with nothing after passing to the limit. Instead we have found a rather elegant way to circumvent the infinite iteration of this process and work with a single sweep of the potential via Kunz-Souillard. Rather than adding random perturbations step by step, we add them all at once, and then process the resulting object on a partition of $\Z$ that is adapted to the parameters of the perturbations.

\subsection{Quasi-Periodic Potentials}

The theme of this paper is to use the Kunz-Souillard method to produce Schr\"odinger operators with pure point spectrum for which it had been hitherto unclear whether pure point spectrum is possible for such operators. We want to discuss one more class of potentials where we prove a result that follows this theme, namely small one-frequency quasi-periodic potentials. In the ergodic setting, choose $\Omega = \T = \R / \Z$, $\alpha \in \T \setminus \Q$, $T \omega = \omega + \alpha$, $f \in C(\T,\R)$. This gives rise to one-frequency quasi-periodic potentials $V_\omega(n) = f(T^n \omega) = f(\omega + n \alpha)$. This class of potentials has been extensively studied; compare again the survey \cite{D15}, and also \cite{JM15}. The case of analytic $f$ is especially well understood \cite{A14c, A14e, A14g, BG00}. Moreover the generic behavior in the class of continuous $f$ is understood as well \cite{AD05, BD08}. For regularity between analytic and continuous, there are several results (see, e.g., \cite{Bj05, C12, Ch12, E97, JN11, K05, S87, WZ15, WZ16}), but our understanding is much more limited. Generally speaking, small $f$ may lead to absolutely continuous spectrum, large $f$ may lead to pure point spectrum, but these statements require Diophantine properties of $\alpha$ and relatively high regularity of $f$ as they are known to fail otherwise. Concretely, no operator in this class with $\|f\|_\infty \le 2$ (and any $\alpha$) is known that has some eigenvalues,\footnote{On the other hand, if $V(n) = (2 + \varepsilon) \cos (2\pi(n\alpha + \omega))$, then for every $\varepsilon > 0$ and Lebesgue almost every $\alpha$ and $\omega$, the associated Schr\"odinger operator has pure point spectrum \cite{J99}.} and also no operator in this class with $\alpha$ Liouville (and any $f$) is known that has some eigenvalues. Here we prove the following theorem.

\begin{theorem}\label{t.main4}
Given $\alpha \in \T \setminus \Q$, $\varepsilon > 0$ and $\omega \in \T$, there is $f \in C(\T,\R)$ with $\|f\|_\infty < \varepsilon$ such that the Schr\"odinger operator with potential $V(n) = f(\omega + n\alpha)$ has pure point spectrum.
\end{theorem}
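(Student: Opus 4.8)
The plan is to deduce Theorem~\ref{t.main4} from the same machinery that yields Theorems~\ref{t.main1} and~\ref{t.main2} --- the correlation-tolerant extension of the Kunz-Souillard method developed in this paper, processed in a single sweep --- the only genuinely new work being the construction of a suitable potential. Whereas for the limit-periodic results one builds the potential by superposing periodic blocks carrying local randomness, here one must realize a potential with the same hierarchical ``globally almost-periodic, locally random'' architecture as a sample $V(n)=f(\omega+n\alpha)$ of one continuous function along the single orbit of the rotation by $\alpha$ through the prescribed phase $\omega$. The continued fraction expansion of $\alpha$ supplies exactly the tower of scales that is needed, and the correlations that sampling along an irrational orbit inevitably forces on $V$ are precisely of the kind that the generalized (as opposed to the classical) Kunz-Souillard scheme is designed to accommodate.

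Concretely, let $p_k/q_k$ be the convergents of $\alpha$, recall that $q_k$ grows at least exponentially and that $\dist(q_k\alpha,\Z)$ is of order $1/q_{k+1}$, and fix a subsequence $q_{k_1}<q_{k_2}<\cdots$ increasing as rapidly as we like (possible for every irrational $\alpha$: letting $k_{j+1}-k_j\to\infty$ forces $q_{k_{j+1}}/q_{k_j}\to\infty$). Choose amplitudes $c_j>0$ with $\sum_j c_j<\varepsilon$, decaying subject to whatever rate the generalized Kunz-Souillard theorem demands. Using the three-distance theorem, for each $j$ let $\Pk_j$ be the partition of $\T$ determined by $\omega,\omega+\alpha,\dots,\omega+(q_{k_j}-1)\alpha$ --- a Rokhlin-castle-type partition of $\asymp q_{k_j}$ atoms adapted to the rotation --- and define $f=\sum_j g_j$, where $g_j$ is constant on each atom of $\Pk_j$, is smoothed over a width $w_j\ll 1/q_{k_j}$ near the atom boundaries so as to be continuous, and takes on the atoms independent values distributed according to a fixed compactly supported density rescaled to lie in $[-c_j,c_j]$. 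Since each $g_j\in C(\T,\R)$ and $\|g_j\|_\infty\le c_j$, the series converges uniformly, $f\in C(\T,\R)$, and $\|f\|_\infty<\varepsilon$.

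I would then analyze $V(n)=f(\omega+n\alpha)$ on a partition of $\Z$ into intervals $I_j$ matched to these scales, $|I_j|\asymp q_{k_j}$. The two structural facts are: (i) along the orbit, passing from $n$ to $n+1$ moves one step up the castle at every scale, so that $g_j(\omega+n\alpha)$, read as a function of $n$, runs through the independent atom-values of scale $j$ in a definite order; restricted to $I_j$ this supplies, up to a controlled number of exceptional sites coming from the castle's two column heights and from the smoothing regions, one fresh independent coordinate per site, while on the other blocks the scale-$j$ contribution is a deterministic function of those same coordinates --- a correlation, not new disorder; and (ii) symmetrically, the contributions to $V|_{I_j}$ of the scales $j'\neq j$ are deterministic functions of the scales-$j'$ coordinates and carry no disorder internal to $I_j$. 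Tuning the growth of $q_{k_j}$, the widths $w_j$, and the amplitudes $c_j$ so that the exceptional sites are harmless and the rescaled single-site densities satisfy the method's regularity hypotheses, one verifies that $V$ lies in the class covered by the generalized Kunz-Souillard theorem. That theorem then gives, for the prescribed $\omega$, pure point spectrum of the operator with potential $V$ for almost every choice of the atom-values of $f$; fixing one such choice produces the desired $f$. Since nothing in the argument restricts $\alpha$, it applies to every irrational frequency, Liouville ones included.

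The step I expect to be the crux is this last matching. The Kunz-Souillard sweep wants to introduce fresh disorder at site $n$, integrate it out, and proceed to $n+1$, with only ``causal'' correlations permitted; the orbit of an irrational rotation, however, produces a Rokhlin \emph{castle} with two unequal column heights rather than a clean period, so the scale-$j$ pattern is only approximately $q_{k_j}$-periodic and, for badly approximable $\alpha$, the mismatch need not be a small fraction of the sites --- one must set up the block partition and the decay of the $c_j$ so that these mismatches are absorbed harmlessly rather than hope they vanish. Two further delicate points feed the same verification: the smoothing forced by continuity of $f$ must be arranged so as not to destroy the absolute continuity of the conditional distributions the sweep integrates against; and, since the phase $\omega$ is prescribed, there is no freedom to average over it, so all the disorder has to be carried by the atom-values of $f$ and the combinatorial statement in (i) must be made quantitative and, in the relevant respects, uniform in $\omega$. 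Once these are in hand, the pure point conclusion follows from the generalized Kunz-Souillard theorem exactly as in the proof of Theorem~\ref{t.main2}.
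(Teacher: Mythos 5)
Your proposal and the paper's proof share the high-level strategy --- superpose small randomized continuous bumps along the orbit of $\omega$ and feed the resulting potential to the correlation-tolerant Kunz-Souillard extension --- but the constructions are genuinely different, and the paper's is both simpler and strictly more general.

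The paper derives Theorem~\ref{t.main4} from Theorem~\ref{t.main5}, which uses nothing about the base dynamics except that $\Omega$ is a compact metric space, $T$ is invertible, and the orbit of $\omega$ is infinite. No arithmetic of $\alpha$ enters at all: no continued fractions, no three-distance theorem, no Steinhaus/castle partitions $\Pk_j$ of $\T$, no tower of scales. One simply places a nonnegative bump $g_n$ peaked at each orbit point $T^n\omega$, with peak value $a_n$ decaying polynomially (of order $(1+|n|)^{-\alpha}$, $\alpha\in(0,1/2)$) and support $B_n$ shrunk so that (i) $T^j\omega\notin B_n$ whenever $|j|<|n|$ and (ii) the supports within each block of a coarse partition of $\Z$ are pairwise disjoint, which also forces $\sum_n \mu_n g_n$ to converge uniformly to something of norm $<\varepsilon$ for any coefficients $\mu_n\in[0,1]$. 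Setting $g=f+\sum_n\xi_n g_n$ with $\xi_n$ i.i.d.\ uniform on $[0,1]$, the potential $V(n)=g(T^n\omega)$ is then automatically of the exact form demanded by Theorem~\ref{t.bs}: fresh rescaled disorder $a_n\xi_n$ at site $n$, a deterministic background $\chi_n=f(T^n\omega)$, and a tail $\sum_{|m|<|n|}\xi_m g_m(T^n\omega)$ that is a linear functional of the inner variables with $\|\cdot\|_+$ norm at most $\varepsilon$ by the block-disjointness. Only Theorem~\ref{t.bs} is invoked; the heavier Theorem~\ref{t.general} (your ``as in the proof of Theorem~\ref{t.main2}'') is reserved for the limit-periodic application.

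Your construction instead anchors itself to the continued-fraction tower of $\alpha$ and tries to manufacture the site-indexed disorder out of constant-on-atoms pieces for the partitions generated by $q_{k_j}$ orbit points. The three difficulties you correctly single out as the crux --- the mismatch coming from the two castle column heights, the smoothing threatening absolute continuity of the conditional single-site densities, and making the atom-visiting combinatorics causal so each site really sees exactly one fresh coordinate --- are artifacts of that design, not of the problem, and none of them arises in the bump construction: the fresh variable at site $n$ is literally $\xi_n$ (there is no bijection between atoms and sites to negotiate), the causality $|j|<|n|\Rightarrow g_n(T^j\omega)=0$ is imposed by fiat through the choice of $B_n$, and the regularity of $g$ is immediate. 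Your outline is not visibly wrong, and the continued-fraction scales do resurface in the paper in Theorem~\ref{t.main6}, where the bump widths are tied to $1/q_{k+1}$ precisely to gain H\"older regularity; but for Theorem~\ref{t.main4} that arithmetic is unnecessary, and by dispensing with it the paper obtains the far more general Theorem~\ref{t.main5} at no extra cost.
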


In particular, even for Liouville frequencies we can find arbitrarily small sampling functions for which eigenvalues can occur. Admittedly, we prove this only for one phase $\omega$ (but by shift invariance of the conclusion the result extends immediately to the orbit of $\omega$ under the rotation by $\alpha$ and hence to a dense set of phases), while one is generally primarily interested in the almost sure (which is relative to Lebesgue measure in this case as an irrational rotation is uniquely ergodic with Lebesgue measure as the unique ergodic measure) spectral type. Nevertheless, this is a new phenomenon in the class of one-frequency quasi-periodic potentials and hence may be considered interesting.

The idea behind the proof of Theorem~\ref{t.main4} is the following. Given $\alpha, \varepsilon, \omega$, we place small and very thin bumps centered at the points in the orbit of $\omega$ under $T$. The bumps are so small and thin that when we add them up, we obtain a continuous function of norm less than $\varepsilon$. This can be done in a way that ensures that we have control over the values $V_\omega$ takes. In fact, we can arrange for a sequence that can be analyzed via the Kunz-Souillard method with the net effect that pure point spectrum results for this potential.

Keeping with another theme of the paper, the Kunz-Souillard method allows one to throw in a fixed background potential for free, and hence one can actually prove the following stronger version of Theorem~\ref{t.main4}: Given $\alpha \in \T \setminus \Q$ and $\omega \in \T$, the set of $f \in C(\T,\R)$ for which the Schr\"odinger operator with potential $V(n) = f(\omega + n\alpha)$ has pure point spectrum is dense. To emphasize how little about the base dynamics we use in the proof of this statement, let us generalize the latter statement even further.

\begin{theorem}\label{t.main5}
Suppose $\Omega$ is a compact metric space and $T : \Omega \to \Omega$ is invertible. Assume $\omega \in \Omega$ is such that its orbit $\{ T^n \omega : n \in \Z \}$ is infinite. Then, the set of $f \in C(\Omega,\R)$ for which the Schr\"odinger operator with potential $V_\omega(n) = f(T^n \omega)$ has pure point spectrum is dense.
\end{theorem}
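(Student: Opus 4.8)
The plan is to run the generalized Kunz--Souillard machinery with the randomness carried by small bumps placed at the points of the orbit, exactly as sketched above for Theorem~\ref{t.main4}, and then to extract a single good realization. Fix $g\in C(\Omega,\R)$ and $\varepsilon>0$; we look for $f\in C(\Omega,\R)$ with $\|f-g\|_\infty<\varepsilon$ for which the operator with potential $V_\omega(n)=f(T^n\omega)$ has pure point spectrum. Write $x_n:=T^n\omega$. The only consequence of the hypothesis we shall use is that the orbit being infinite means exactly that the points $x_n$, $n\in\Z$, are pairwise distinct (equivalently, no $x_n$ is $T$-periodic).

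First fix the combinatorial data that the generalized method requires: a partition $\Z=\bigsqcup_{k\ge0}B_k$ into consecutive finite blocks, a sequence of scales $\lambda_k\downarrow0$, and a fixed smooth probability density $\kappa$ supported in $[-1,1]$; we take the $\lambda_k$ to decay fast enough that $\sum_k\lambda_k|B_k|<\varepsilon$ and fast enough to meet whatever summability the method imposes. Let $k(n)$ denote the index of the block containing $n$. Since the compact metric space $\Omega$ is normal, Urysohn's lemma lets us pick, for every $n$, a function $b_n\in C(\Omega,[0,1])$ with $b_n(x_n)=1$ and $\supp b_n\subset B(x_n,r_n)$, the radius $r_n>0$ chosen so small that $B(x_n,r_n)$ avoids all the points $x_m$ with $0<|m-n|\le N_n$ --- possible because these are finitely many points, each distinct from $x_n$ --- where $N_n$ is taken large enough that $B_0\cup\cdots\cup B_{k(n)}\subset\{m:|m-n|\le N_n\}$. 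For $\xi=(\xi_n)\in[-1,1]^\Z$ set
\[
 f_\xi := g + \sum_{n\in\Z}\lambda_{k(n)}\,\xi_n\,b_n .
\]
Since $\sum_n\lambda_{k(n)}=\sum_k\lambda_k|B_k|<\infty$, the series converges uniformly, so $f_\xi\in C(\Omega,\R)$ for \emph{every} $\xi$ and $\|f_\xi-g\|_\infty<\varepsilon$; thus continuity and the size constraint are automatic.

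Now consider the potential $V^\xi_\omega(n)=f_\xi(x_n)=g(x_n)+\sum_m\lambda_{k(m)}\xi_m b_m(x_n)$. The term $m=n$ contributes exactly $\lambda_{k(n)}\xi_n$, while any other $m$ that contributes has $x_n\in\supp b_m\subset B(x_m,r_m)$, hence $|n-m|>N_m$ (by the choice of $r_m$), hence $n\notin B_0\cup\cdots\cup B_{k(m)}$ (by the choice of $N_m$), i.e. $k(m)<k(n)$. Thus $V^\xi_\omega(n)$ equals a fixed bounded background $g(x_n)$, plus fresh randomness $\lambda_{k(n)}\xi_n$ whose conditional law given all other coordinates is the prescribed rescaled copy of $\kappa(t)\,dt$, plus a correction at the sites of $B_k$ that depends only on the randomness in the strictly earlier blocks $B_0,\dots,B_{k-1}$. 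This lower-triangular block structure is the kind of correlation the generalized Kunz--Souillard method is designed to accommodate; since it also requires no ergodicity and allows an arbitrary bounded background $n\mapsto g(x_n)$, it yields pure point spectrum of $H_{V^\xi_\omega}$ for almost every $\xi$ (product of copies of $\kappa(t)\,dt$). Picking $\xi^{*}$ in this full-measure set and putting $f:=f_{\xi^{*}}$ finishes the proof, because $\|f-g\|_\infty<\varepsilon$ and $V_\omega(n)=f(T^n\omega)=V^{\xi^{*}}_\omega(n)$.

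The substantive point is the appeal to the generalized method in the third paragraph. When $(\Omega,T)$ is minimal the orbit is dense, so the bumps unavoidably overlap infinitely often and the correction term cannot be made to vanish; what one must check is that it nevertheless falls within the correlations the method tolerates --- concretely, that the Kunz--Souillard $L^p$-estimates still close up when the single-site distribution at $n$ is the rescaled $\kappa(t)\,dt$ translated by a quantity measurable with respect to the earlier blocks. The freedom to shrink each $r_n$ (using only distinctness of the orbit points) is what gives room here: it forces the influence of $b_n$ off every orbit point of nearby index, so the contamination only ever couples a block to earlier blocks and can be dominated by tuning the decay of $\lambda_k$ against $|B_k|$ and the growth of $N_n$. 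Everything else --- continuity and smallness of $f_\xi$, and replacing ``almost every realization'' by ``one realization'' --- is soft.
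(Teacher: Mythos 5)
There is a genuine gap in the way you pick the scales $\lambda_k$. Because you allow the bumps to overlap, you need $\|f_\xi-g\|_\infty\le\sum_{n}\lambda_{k(n)}=\sum_k\lambda_k|B_k|<\varepsilon$. But the summability hypothesis \eqref{e.summabilityass} of Theorem~\ref{t.bs}, applied with scales $a_n=\lambda_{k(n)}\downarrow0$, forces $\sum_n a_n^2=\sum_k\lambda_k^2|B_k|=\infty$: if this sum converges, the exponent in \eqref{e.summabilityass} stays bounded while the prefactor $a_n^{-1/2}\to\infty$, so the series diverges. Yet $\lambda_k^2|B_k|\le\lambda_0\,\lambda_k|B_k|$, hence $\sum_k\lambda_k^2|B_k|\le\lambda_0\sum_k\lambda_k|B_k|<\lambda_0\varepsilon<\infty$. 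The two requirements are therefore incompatible, and ``decaying $\lambda_k$ fast enough to meet the summability'' is the wrong intuition: the Kunz--Souillard condition wants the scales to decay \emph{slowly} (roughly, slower than $|n|^{-1/2}$), while your sup-norm control wants them to decay fast.

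The paper escapes this tension in a way your construction discards. It takes bump heights $a_n\sim\varepsilon(1+|n|)^{-\alpha}$ with $\alpha\in(0,1/2)$, so that $\sum_n a_n^2=\infty$ (and \eqref{e.summabilityass} holds) even though $\sum_n a_n=\infty$. The bound $\|f-g\|_\infty<\varepsilon$ is obtained not from $\ell^1$-summability of the heights but from a \emph{disjointness} constraint: the supports of the bumps indexed by each partition block are chosen pairwise disjoint, so at any $x\in\Omega$ at most one bump per block is nonzero, and the blocks grow fast enough that $\sum_i a_{m_i}<\varepsilon$ for every choice of one representative $m_i$ per block. That disjointness is available in your setting too --- for each block the finitely many orbit points $x_n$, $n\in B_k$, are pairwise distinct, and you are already shrinking the radii $r_n$ --- and once you impose it you can replace $\sum_k\lambda_k|B_k|<\varepsilon$ by $\sum_k\lambda_k<\varepsilon$ and then let $|B_k|$ grow fast enough that $\sum_k\lambda_k^2|B_k|$ diverges at the rate \eqref{e.summabilityass} requires. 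Apart from this, your reduction to the Kunz--Souillard framework (shrinking radii to force contamination at block $k$ to come only from blocks $<k$, treating $g(x_n)$ as background, extracting a single good realization $\xi^*$) is the same mechanism the paper uses.
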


Given information on the rate of return for the infinite orbit in question, one can infer some additional information about the regularity of the sampling functions for which pure point spectrum can be established. For the sake of concreteness, let us discuss this in the context of Theorem~\ref{t.main4}.

Recall that that result is most surprising for Liouville frequencies $\alpha$. However, in that case there is no hope to improve the regularity statement on $f$. Fixing any modulus of continuity, one can show using a Gordon-type argument that for a suitable class of Liouville numbers (that will form a dense $G_\delta$ subset of $\T$), there are no eigenvalues for any $f$ with the given modulus of continuity and any phase $\omega$. Thus one has to look for improved regularity of $f$ only when the frequency $\alpha$ is not Liouville.

Here is a result in this direction.

\begin{theorem}\label{t.main6}
Suppose $\alpha \in \T \setminus \Q$ is Diophantine. Then, given any $\omega \in \T$ and $\gamma\in (0, 1/2)$, the set of H\"older continuous functions $f \in C^\gamma(\T,\R)$ for which the Schr\"odinger operator with potential $V(n) = f(\omega + n\alpha)$ has pure point spectrum is dense in $C^\gamma(\T,\R)$.
\end{theorem}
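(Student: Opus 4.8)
The plan is to follow the same strategy used for Theorem~\ref{t.main4}, but now with a careful accounting of the regularity of the bump functions we superimpose. As before, for a Diophantine $\alpha$, the orbit $\{T^n\omega : n \in \Z\}$ consists of points $\omega + n\alpha$ on $\T$, and we exploit the quantitative separation of nearby orbit points: since $\alpha$ is Diophantine, there are constants $c>0$ and $\tau\ge 1$ with $\|n\alpha\|_{\T} \ge c |n|^{-\tau}$ for all $n\ne 0$, so the points $\omega, \omega+\alpha, \dots, \omega+(N-1)\alpha$ are pairwise $\gtrsim N^{-\tau}$ separated. The idea is to build $f$ as a sum $f = \sum_{k} g_k$ of ever-smaller, ever-thinner bumps, where the bump contributing the value at the orbit point visited near ``time'' $n$ is supported on an interval of width comparable to a small power of the local orbit separation. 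A bump of height $h$ supported on an interval of width $w$ has H\"older-$\gamma$ seminorm $\sim h w^{-\gamma}$; so to keep the $C^\gamma$-norm small while allowing the widths $w$ to shrink like $N^{-\tau}$ (forced by the separation of orbit points), we must take the heights $h$ to shrink fast enough that $h w^{-\gamma}\to 0$. Concretely, if the $k$-th generation of bumps has height $h_k$ and width $w_k \sim N_k^{-\tau}$, we need $\sum_k h_k w_k^{-\gamma}<\eps$, i.e. $\sum_k h_k N_k^{\tau\gamma}<\eps$, which is arrangeable by letting $h_k$ decay superexponentially in $k$ while $N_k$ grows at a controlled (say, doubly exponential is more than enough) rate.

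First I would fix the target sampling function $f_0 \in C^\gamma(\T,\R)$ that we wish to approximate, and a tolerance $\eps>0$; the perturbed potential will be $V(n) = (f_0+f)(\omega+n\alpha)$ with $\|f\|_{C^\gamma}<\eps$, and since $C^\gamma$-small perturbations are in particular $\|\cdot\|_\infty$-small, the ``background potential for free'' feature of the Kunz--Souillard machinery (exactly as invoked in the proof of Theorem~\ref{t.main5} and the strengthened form of Theorem~\ref{t.main4}) lets us treat $f_0(\omega+n\alpha)$ as a fixed bounded background and concentrate on designing $f$. Second, I would set up a partition of $\Z$ into consecutive blocks $I_k$ of lengths roughly $N_k$, chosen so that within block $I_k$ the orbit points $\{\omega+n\alpha : n\in I_k\}$ realize the separation bound at scale $N_k^{-\tau}$, and place one thin bump of height $h_k$ and width $w_k\sim N_k^{-\tau-1}$ (the extra power to guarantee disjointness of distinct bumps and hence that the sum is genuinely $C^\gamma$ with controlled norm) at each such orbit point, with the heights $h_k$ encoding the i.i.d.\ rescaled single-site randomness that the Kunz--Souillard method consumes. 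Third, I would verify that $f := \sum_k g_k$ is well-defined, continuous, H\"older-$\gamma$, and has $\|f\|_{C^\gamma}<\eps$: the supports are pairwise disjoint by construction, so $\|f\|_{C^\gamma} \le \sup_k \|g_k\|_{C^\gamma} + (\text{a convergent sum controlling cross-block oscillation})$, which is $\lesssim \sum_k h_k w_k^{-\gamma}<\eps$ by the choice of parameters. Fourth, I would check that the resulting potential sequence $V(n)$, block by block, is exactly of the form to which the generalized Kunz--Souillard theorem (the main technical result of the paper, with its allowance for correlations and for a background potential) applies, yielding pure point spectrum — this is the step where we ``cash in'' the randomness built into the heights $h_k$.

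The main obstacle, I expect, is the tension between two competing demands in the parameter selection. On one hand, the Kunz--Souillard scheme needs the random single-site pieces to have distributions that are not too degenerate — roughly, the rescaled density must retain enough mass and smoothness for the contraction estimates on the associated transfer operator to close up — which lower-bounds how fast the heights $h_k$ may decay and, more delicately, constrains the shape (not just the height) of each bump. On the other hand, $C^\gamma$-smallness forces $h_k N_k^{\tau\gamma}\to 0$, and since the orbit-separation lower bound $N_k^{-\tau}$ cannot be improved for a general (even Diophantine) $\alpha$, the widths are genuinely pinned at that scale; the restriction $\gamma<1/2$ is presumably exactly what is needed for these two constraints to be simultaneously satisfiable, paralleling the $1/2$ threshold that appears elsewhere in Kunz--Souillard-type arguments (it is the borderline at which the transfer-operator contraction survives the rescaling). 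Making this quantitative — choosing sequences $(N_k)$, $(h_k)$, $(w_k)$ and a single-site density so that all of the Kunz--Souillard hypotheses hold uniformly across blocks \emph{and} $\sum_k h_k w_k^{-\gamma}<\eps$ — is the crux; once the bookkeeping is arranged, the conclusion follows by feeding the construction into the paper's generalized localization theorem and then invoking the standard passage from dynamical to spectral localization.
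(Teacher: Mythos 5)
Your overall strategy is the same as the paper's: superimpose thin tent-shaped bumps centered at the orbit points $\omega+n\alpha$, use the quantitative separation of the orbit to keep bumps of the same generation disjoint, and feed the resulting random sampling function into the generalized Kunz--Souillard theorem (Theorem~\ref{t.bs}) while freezing the target $f_0$ as a bounded background. The paper also uses the continued fraction denominators $q_k$ rather than a raw Diophantine exponent $\tau$ — this is finer and leads naturally to the relaxed condition \eqref{e.condkappa} — but that is a cosmetic difference.

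The genuine gap is in your parameter accounting, and it obscures exactly the place where the threshold $\gamma<1/2$ enters. The heights $h_k$ are \emph{not} free parameters that you can send to zero superexponentially: the Kunz--Souillard summability condition \eqref{e.summabilityass} requires the height sequence $a_n$ (as a function of the site $n$, not the generation $k$) to decay slowly enough that $\sum_n a_n^{-1/2}\exp\bigl(-d\sum_{j\le |n|/2}a_j^2\bigr)$ converges, and simultaneously the $C^\gamma$ bound caps the height of a bump of width $w$ at roughly $w^\gamma$. The correct move, as in the paper, is to pin the heights to the widths via $h_k\sim w_k^{\tilde\gamma}$ for a fixed $\tilde\gamma\in(\gamma,1/2)$ — so that each individual bump has $C^{\tilde\gamma}$ seminorm $\lesssim 1$ and the full sum is then shown, by a scale-by-scale comparison of $|x-y|$ against $1/q_k$ and $1/q_{k+1}$, to lie in $C^\gamma$ — and with $w_k\sim 1/q_{k+1}$ this gives $a_n\sim q_{k+1}^{-\tilde\gamma}$ for $n\sim q_k$. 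Plugging this into the summability condition yields $\sum_k q_{k+1}^{\tilde\gamma/2}\exp\bigl(-\delta q_k^{1-2\tilde\gamma}\bigr)$, and convergence visibly requires $1-2\tilde\gamma>0$; that is the source of the $1/2$ threshold, together with the Diophantine growth bound $q_{k+1}\lesssim q_k^\sigma$ to keep the polynomial prefactor under control. Your proposed scales ($N_k$ doubly exponential in $k$, $h_k$ superexponentially small in $k$) would actually make $\sum_k h_k w_k^{-\gamma}$ diverge, since $w_k^{-\gamma}\sim N_k^{\tau\gamma}$ then grows doubly exponentially; the point is that $h_k$ cannot be chosen much smaller than $w_k^{\tilde\gamma}$ without paying for it in the $a_n^{-1/2}$ factor of the Kunz--Souillard sum, so the two constraints pin $h_k$ to within a bounded ratio, and the room to maneuver is entirely in the choice of $\tilde\gamma<1/2$ and in making the constant $d(r)$ large by an appropriate choice of single-site density $r$ (Remark~\ref{r.hatr}). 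Making this tension precise is the whole content of the proof; the rest is as you describe.
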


\begin{remark}
From the proof of Theorem \ref{t.main6} one can see that in fact the Diophantine condition on $\alpha$ can be relaxed. Namely, if $\left\{\frac{p_k}{q_k}\right\}$ is the sequence of continued fraction approximants for $\alpha$, it is enough to require that for some $\kappa>0$ and $\tilde\gamma\in (\gamma, 1/2)$
\begin{equation}\label{e.condkappa}
 \sum_{k=1}^\infty \exp\left(-\kappa q_k^{1-2\tilde\gamma}\right)q_{k+1}^{\tilde\gamma/2}<\infty.
\end{equation}
\end{remark}

It is natural to ask whether one can go beyond the threshold $1/2$ for the H\"older exponent of the sampling function $f$. We suspect this is not the case and ask the following question.

\begin{question}
Suppose $\gamma > 1/2$ and $\alpha \in \T \setminus \Q$. Is it true that for some $\varepsilon > 0$, the Schr\"odinger operator with potential $V(n) = f(\omega + n\alpha)$ has no eigenvalues for every $f \in C^\gamma(\T,\R)$ with $\|f\|_\gamma < \varepsilon$ and every $\omega \in \T$? One can also ask the even stronger question whether there is a uniform choice of $\varepsilon > 0$ that works for every $\alpha \in \T \setminus \Q$.
\end{question}

\subsection{An Extension of the Kunz-Souillard Method}

Let us now discuss the mechanism that underlies the results above. We rely on suitable generalizations of the Kunz-Souillard method.

For a linear functional $\mathcal{L} : \mathbb{R}^N \to \mathbb{R}$, given by
$$
\mathcal{L}(x_1, \ldots, x_N) = b_1 x_1 + \ldots + b_N x_N
$$
with $b_1, \ldots, b_N \in \R$, define $\|\mathcal{L}\|_{+} = \sum_{i=1}^N |b_i|$. Notice that there are many equivalent norms for a given operator $\mathcal{L}$, but we will need to assume a uniform bound on the norms of a family of operators from $\mathbb{R}^N \to \mathbb{R}$ with increasing dimension $N$, and in this case different definitions of norms will lead to essentially different conditions.

Our first extension of the work of Kunz-Souillard \cite{KS80}, indeed of Simon's follow-up work \cite{S82}, reads as follows:

\begin{theorem}\label{t.bs}
Let $\{\xi_n\}_{n=-\infty}^{\infty}$ be independent random variables with distributions of the form $r_n(x) \, dx$, where $r_n(x) = a_n^{-1} r(a^{-1}_n x)$, $a_n > 0$, and $r$ is compactly supported and bounded.

Then, there are constants $d = d(r), \lambda = \lambda(r) > 0$ such that the following holds. Assume the sequence $\{a_n\}_{n = -\infty}^{\infty}$ is bounded and such that
\begin{equation}\label{e.summabilityass}
\sum_{n\in \mathbb{Z}} a_n^{-1/2} e^{- d \sum_{j = 1}^{\lfloor \frac{|n|-1}{2} \rfloor} \min \{ a_{(\mathrm{sgn} \, n) 2j}^2, a_{(\mathrm{sgn} \, n) (2j-1)}^2 , \lambda \} } < \infty.
\end{equation}


Let $\{\chi_n\}_{n=-\infty}^{\infty}$ be independent {\rm (}not necessarily identically distributed{\rm )} random variables that are uniformly bounded.

Let $\{ \mathcal{L}_n : \mathbb{R}^{2n-1} \to \mathbb{R} \}_{n = -\infty}^\infty$, $n \in \Z \setminus \{ 0 \}$, be a collection of linear functionals with uniformly bounded $\|\cdot\|_+$ norms.

Then, almost surely, the discrete Schr\"odinger operator with the potential
$$
V:\mathbb{Z}\to \mathbb{R}, \ V(n)=\xi_n+\chi_n+\mathcal{L}_n(\xi_{-|n|+1}, \ldots, \xi_{|n|-1})
$$
has pure point spectrum, where $\mathcal{L}_n(\xi_{-|n|+1}, \ldots, \xi_{|n|-1})$ is set equal to zero for $n = 0$.
\end{theorem}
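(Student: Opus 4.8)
The plan is to adapt the Kunz--Souillard machinery of \cite{KS80, S82}, keeping track of two new features: the non-identical, $a_n$-rescaled single-site densities $r_n(x) = a_n^{-1} r(a_n^{-1}x)$, and the backward linear correlations carried by the functionals $\mathcal{L}_n$. The correlations are disposed of first, by a conditioning argument. On the side $n \ge 1$, run the Kunz--Souillard transfer-matrix dynamics outward from the origin, and condition on the $\sigma$-algebra $\mathcal{F}_0$ generated by all the $\chi_m$ together with the $\xi_m$ for $m \le 0$. As the dynamics processes the sites in the order $n = 1, 2, 3, \ldots$, the value $V(n) = \xi_n + c_n$ with $c_n = \chi_n + \mathcal{L}_n(\xi_{-|n|+1}, \ldots, \xi_{|n|-1})$ splitting into an $\mathcal{F}_0$-measurable part plus a linear combination of $\xi_1, \ldots, \xi_{n-1}$; hence, conditionally on $\mathcal{F}_0 \vee \sigma(\xi_1, \ldots, \xi_{n-1})$, the law of $V(n)$ has density $x \mapsto r_n(x - c_n)$ with $c_n$ measurable with respect to the conditioning. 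Since the Kunz--Souillard contraction estimate is insensitive to a deterministic translation of the single-site density — and the uniform bounds on $a_n$, on $\chi_n$, and on $\|\mathcal{L}_n\|_+$ make the shifts $c_n$ uniformly bounded, which keeps all weighted-space constants uniform under that translation — the analysis proceeds exactly as in the independent case, with $r$ replaced by $r_n$ at step $n$ and all probabilities read conditionally on $\mathcal{F}_0$. The auxiliary noise $\chi_n$ is simply absorbed into $c_n$; its independence is used only to legitimize the conditioning.

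The analytic heart is a quantitative contraction estimate for the Kunz--Souillard transfer operator attached to a rescaled density. I would reprove the basic estimate of \cite{KS80, S82} keeping the scaling parameter explicit: acting on the relevant weighted $L^1$ (or $L^2$) space of densities on $\R \cup \{\infty\}$, the one-site operator is a contraction of norm $\le 1$, while the composition of the two one-site operators attached to a consecutive pair of sites with scales $a_{2j-1}, a_{2j}$ improves this to norm $\le e^{-d\min\{a_{2j-1}^2,\, a_{2j}^2,\, \lambda\}}$ for constants $d = d(r)$, $\lambda = \lambda(r) > 0$. Heuristically, convolution against the sharply peaked density $r_n$ has a first-order term in $a_n$ that is merely a translation, so the genuine smoothing is of the order of the variance, $\sim a_n^2$, and only up to a saturation scale $\lambda$ beyond which it is uniform; a pair of steps is needed because, exactly as in the classical argument, a single transfer matrix fails to contract in the natural projective metric and the averaging at one step only bites once the incoming distribution has been sufficiently spread by the other, which is also why the pairwise minimum rather than a sum appears. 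I expect this rescaled estimate to be the \emph{main obstacle}: one must carry $a_n$ through every inequality of the original $L^1$/$L^2$ argument and pin down the correct saturation behavior producing the truncation $\min\{\,\cdot\,,\lambda\}$.

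Granting the contraction estimate, the conclusion is a telescoping. Iterating it along the sweep bounds the conditional expectation of the natural Kunz--Souillard quantity at site $n$ by a constant times
\[
 a_n^{-1/2}\,\exp\!\left(-d \sum_{j=1}^{\lfloor\frac{|n|-1}{2}\rfloor} \min\{a_{(\mathrm{sgn}\,n)2j}^2,\, a_{(\mathrm{sgn}\,n)(2j-1)}^2,\, \lambda\}\right),
\]
with the prefactor $a_n^{-1/2}$ recording (up to the constant $\|r\|_2$) the size $\|r_n\|_2$ of the single-site density entering at the final step, and the boundedness of $\{a_n\}$ keeping $H$ bounded and the constants uniform. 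Summing over $n \ge 1$ and invoking \eqref{e.summabilityass} gives a bound that is finite and uniform in the conditioning, hence holds unconditionally after integrating out $\mathcal{F}_0$; the symmetric argument on $n \le -1$ — now conditioning on the $\chi_m$ and on the $\xi_m$ with $m \ge 0$ — yields the analogous bound there. Finally one feeds the two bounds into the standard Kunz--Souillard/Simon mechanism, by which summability of this quantity implies dynamical localization, and therefore pure point spectrum, for the whole-line operator almost surely. Some care is needed to combine the two half-lines and to pass from conditional almost-sure statements, valid for almost every value of the conditioning data, to an unconditional one, but this is routine Fubini bookkeeping.
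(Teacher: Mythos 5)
Your proposal correctly identifies the two new features to track (the $a_n$-rescaled densities and the linear correlations), and your guessed contraction lemma — a two-step contraction of size $e^{-d\min\{a_{2j-1}^2,a_{2j}^2,\lambda\}}$ with an $a_n^{-1/2}$ terminal prefactor, proved by keeping the scale explicit through Simon's $L^1/L^2$ argument — is exactly the paper's Lemma~\ref{l.ks4}. You also correctly isolate the decisive observation that the Kunz--Souillard operator estimates are uniform under translation of the single-site density, which is indeed what neutralizes both the background $\chi_n$ and the terms $\mathcal{L}_n$.

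However, the \emph{mechanism} you propose for exploiting this — conditioning on $\mathcal{F}_0 = \sigma(\chi,\,\xi_m : m \le 0)$ and sweeping $n = 1,2,\ldots$ outward, reading the conditional law of $V(n)$ as a shifted $r_n$ — does not work as stated, and the gap is not ``routine Fubini bookkeeping.'' The functionals $\mathcal{L}_m$ for $m \le -2$ depend on $\xi_j$ for $0 < j < |m|$, so conditioning on $\mathcal{F}_0$ does \emph{not} freeze the potential on the left half-line: $V(-2), V(-3), \ldots$ remain random and change as you process $\xi_1, \xi_2, \ldots$. The Kunz--Souillard quantity $\rho_L(n,0)$ is built from the eigendata of the full finite-volume operator $H^{(L)}$ on $\{-L,\ldots,L\}$, so you cannot reduce it to a one-sided, sequentially conditioned Markov sweep; the two half-lines are coupled through the $\mathcal{L}_m$'s, and the Kunz--Souillard formula is in the first place a \emph{global} $(2L+1)$-dimensional change of variables, not a sequential disintegration. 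The paper handles this differently: it writes the change of variables $v \mapsto (x,E)$ as a composition $A\circ F$, where $F$ is the classical Kunz--Souillard substitution and $A$ is the linear map that absorbs the $\mathcal{L}_n$'s; because the $\mathcal{L}_n$'s only reach backward in $|n|$, the map $A$ is unipotent and $\det A = 1$, so the Jacobian is unchanged and the transformed integral has the same structure as in the independent case with shifted kernels, to which the translation-invariant operator bounds apply. This $\det A = 1$ observation is the missing ingredient that replaces your conditioning step and is what actually makes the correlations harmless.
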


\begin{remark}
Notice that one is allowed to set all the variables $\{\chi_n\}_{n=-\infty}^{\infty}$ to be constants, and in this way this will produce the statement with a bounded background potential.
\end{remark}

\begin{remark}\label{r.hatr}
One of the natural choices for $r$ is the uniform distribution on the interval $[0,1]$, $r(x) = \chi_{[0,1]}(x)$; this is what was done in \cite{S82}. The value $d$ that corresponds to the distribution $r$ is related to the second derivative of the Fourier transform $\hat r$ at zero (which is always negative for absolutely continuous compactly supported probability distribution).
\end{remark}

\begin{remark}
Notice that any bounded sequence $\{a_n\}$ with $|a_n| \ge C(1+|n|)^{-\alpha}$ for some $C>0$ and $\alpha\in (0, \frac{1}{2})$ satisfies the conditions in Theorem~\ref{t.bs}. This is the condition that was used in \cite{S82}.
\end{remark}

Theorem \ref{t.bs} will provide the tool we use to prove Theorem \ref{t.main5}. In order to prove Theorem~\ref{t.main2}, we will need a more general (and, unfortunately, also more technical) statement.

Suppose we are given a strictly increasing sequence of integers $\{l_m\}_{m\in \mathbb{Z}}$ with $l_{-1} < 0 \le l_0$. This defines a partition of $\mathbb{Z}$ into a union of disjoint intervals $I_m=(l_{m-1}, l_m]$, $m\in \mathbb{Z}$, $\bigcup_{m\in \mathbb{Z}}I_m=\mathbb{Z}$ with $0 \in I_0$. Denote by $m(n)$ the function defined by $n\in I_{m(n)}$.

\begin{theorem}\label{t.general}
Suppose a collection of independent random variables $\{\xi_{n,k}\}_{n\in \mathbb{Z}, k\ge |m(n)|}$ is given, where $\xi_{n,k}$ is distributed with respect to $r_k(x) \, dx$, where $r_k(x)=\varepsilon_k^{-1}r(\varepsilon^{-1}_k x)$, $\varepsilon_k > 0$, and
\begin{equation}\label{e.epsilonsummable}
\sum_{k=0}^\infty \varepsilon_k < \infty.
\end{equation}
Suppose also that a collection of linear functionals $\mathcal{L}_{n,k} : \{ \{ \xi_{s,k}\} : |m(s)| < |m(n)| \} \to \mathbb{R}$ is given, with uniformly bounded $\|\cdot\|_+$ norms.

Then, there is a constant $d = d(r) > 0$ such that the following holds. Suppose that
\begin{equation}\label{e.generalthmsummability}
\sum_{n \in \mathbb{Z}} \varepsilon_{|m(n)|}^{-1/2} e^{- d \sum_{j = 1}^{\lfloor \frac{|n|-1}{2} \rfloor} \min \{ \varepsilon_{|m((\mathrm{sgn} n) 2j)|}^2, \varepsilon_{|m((\mathrm{sgn} n) (2j-1))|}^2 \} } < \infty.
\end{equation}
Let $\{\chi_n\}_{n = -\infty}^{\infty}$ be independent {\rm (}not necessarily identically distributed{\rm )} random variables that are uniformly bounded. Then, almost surely, the discrete Schr\"odinger operator with the potential
$$
V : \mathbb{Z}\to \mathbb{R}, \; V(n) = \sum_{k \ge|m(n)|} \xi_{n,k} + \chi_{n} + \sum_{k=0}^{|m(n)| - 1} \mathcal{L}_{n, k}(\{\xi_{s,k}\}, |m(s)|<|m(n)|)
$$
has pure point spectrum.
\end{theorem}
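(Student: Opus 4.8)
The plan is to reduce Theorem~\ref{t.general}, by a single conditioning step, to precisely the situation handled in the proof of Theorem~\ref{t.bs}, with the single-site scale $a_n$ there replaced everywhere by $\varepsilon_{|m(n)|}$. Since the conclusion ``pure point spectrum'' holds or fails for a given full realization, by Fubini it is enough to establish it for $\mathcal{G}$-almost every realization of some conveniently chosen sub-$\sigma$-algebra $\mathcal{G}$ and for almost every realization of the remaining variables.

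Take $\mathcal{G}$ to be generated by all the background variables $\{\chi_n\}_{n\in\Z}$ together with all the higher-generation variables $\{\xi_{n,k}:k>|m(n)|\}$, and put $\zeta_n:=\xi_{n,|m(n)|}$, so that the $\zeta_n$ are independent and $\zeta_n$ has density $r_{|m(n)|}(x)=\varepsilon_{|m(n)|}^{-1}r(\varepsilon_{|m(n)|}^{-1}x)$. Conditioning on $\mathcal{G}$, and using that each $\mathcal{L}_{n,k}$ is linear in the generation-$k$ variables (of which, for $k<|m(n)|$, exactly those with $|m(s)|=k$ survive as $\zeta$'s, while those with $|m(s)|<k$ become constants), one gets, for $\mathcal{G}$-almost every realization of the conditioned variables,
\[
V(n)=\zeta_n+b_n+\widetilde{\mathcal{L}}_n\big(\{\zeta_s:|m(s)|<|m(n)|\}\big),
\]
where $b_n\in\R$ is now a fixed number (absorbing $\chi_n$, the $\xi_{n,k}$ with $k>|m(n)|$, and the parts of the $\mathcal{L}_{n,k}$ depending on conditioned variables) and $\widetilde{\mathcal{L}}_n$ is linear. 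Although $\|\widetilde{\mathcal{L}}_n\|_+$ may grow like $|m(n)|\cdot\sup_{n,k}\|\mathcal{L}_{n,k}\|_+$, the potential stays uniformly bounded: with $\supp r\subseteq[-C,C]$ and $\sum_k\varepsilon_k<\infty$ (condition \eqref{e.epsilonsummable}), both $b_n$ and $\widetilde{\mathcal{L}}_n(\zeta_\bullet)$ are $O\big(\sum_k\varepsilon_k\big)$. Thus, after conditioning, the potential has exactly the shape occurring in Theorem~\ref{t.bs} — a fresh variable $\zeta_n$ with density $r_{|m(n)|}$ at each site, plus a bounded background consisting of a constant and a linear functional of the $\zeta$'s attached to blocks closer to the origin; in particular the infinite sums $\sum_{k\ge|m(n)|}\xi_{n,k}$ cause no difficulty, since conditioning on $\mathcal{G}$ freezes all but the leading term.

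One then checks that this background is compatible with running the Kunz-Souillard sweeps along the two half-lines. Because the blocks are nested about the origin ($I_1,I_2,\dots\to+\infty$ and $I_{-1},I_{-2},\dots\to-\infty$), if $n>0$ then every $\zeta_s$ with $|m(s)|<|m(n)|$ sits at a nonpositive site or at a positive site $s<n$, and symmetrically for $n<0$; hence, sweeping the right half-line $1,2,3,\dots$ (conditioning on, or simply taking the worst case over, the values $\{\zeta_s:s\le0\}$ — harmless, as the Kunz-Souillard estimates are uniform over bounded backgrounds) and then the left half-line, the background at each step depends only on already-processed variables and on conditioned or uniformly controlled ones. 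This includes the ``self-referential'' feature that $\zeta_s$ reappears in the backgrounds of infinitely many later sites, which is already present in, and handled by, the proof of Theorem~\ref{t.bs}. Consequently the per-site transfer-matrix contraction estimates of that proof apply, with the scale $a_n$ replaced by $\varepsilon_{|m(n)|}$, and the factors they produce, summed over $n\in\Z$, are exactly the series in \eqref{e.generalthmsummability} (the cutoff $\lambda$ in \eqref{e.summabilityass} may be dropped, since $\varepsilon_k\to0$ gives $\varepsilon_k^2<\lambda$ for all large $k$, the finitely many exceptions costing only a decrease of $d$ and a bounded multiplicative factor). Its finiteness yields, for $\mathcal{G}$-almost every realization, pure point spectrum for almost every choice of the $\zeta_n$; by Fubini, pure point spectrum almost surely.

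The step I expect to be the genuine obstacle is not this bookkeeping but the transport of the core estimate of Theorem~\ref{t.bs} itself: one must check that integrating out $\zeta_n$ at step $n$ contracts the relevant density of the transfer-matrix (Riccati) variable by an amount controlled solely by the spread $\varepsilon_{|m(n)|}$ of $r_{|m(n)|}$ — see Remark~\ref{r.hatr} — and is \emph{insensitive} to the possibly large, self-referentially correlated background $b_n+\widetilde{\mathcal{L}}_n$, and, further, that the two half-line estimates combine (again via Fubini) to force the polynomially bounded solution that exists at spectrally a.e.\ energy to be $\ell^2$ at both $\pm\infty$, hence a genuine eigenfunction, so that every spectral measure is pure point. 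These are precisely the delicate points of the Kunz-Souillard/Simon method as extended in the proof of Theorem~\ref{t.bs}; in the present generality they require only the replacement of $a_n$ by $\varepsilon_{|m(n)|}$ and the observation, used above, that summability of $\{\varepsilon_k\}$ keeps every potential bounded irrespective of the growth of $\|\widetilde{\mathcal{L}}_n\|_+$.
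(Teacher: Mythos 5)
Your reduction is essentially the paper's own proof: freeze $\{\chi_n\}$ together with the higher-generation variables $\{\xi_{n,k}: k>|m(n)|\}$, retain $\zeta_n=\xi_{n,|m(n)|}$ as the live variable at site $n$ (with scale $\varepsilon_{|m(n)|}$), absorb the frozen pieces into the background, rerun the Kunz--Souillard change of variables and the per-pair contraction estimate of Lemma~\ref{l.ks4}(e), drop the $\lambda$-cutoff because $\varepsilon_k\to0$, and invoke the summability hypothesis \eqref{e.generalthmsummability}. That is exactly what the paper does, and your bookkeeping is correct.

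Two points worth noting. First, you flag something the paper leaves implicit and which is genuinely the right thing to worry about: after merging the $\mathcal{L}_{n,k}$ into a single $\widetilde{\mathcal{L}}_n$, the $\|\cdot\|_+$ norm need not be uniformly bounded, so Theorem~\ref{t.bs} as literally stated does not apply. You correctly identify that what the machinery actually needs is (i) boundedness of the resulting potential, which follows from $\sum_k \varepsilon_k < \infty$ and the compact support of $r$, and (ii) the ``triangular'' dependency structure (each $\widetilde{\mathcal{L}}_n$ sees only $\zeta_s$ with $|m(s)|<|m(n)|$), which makes the linear correction $A$ in the change of variables unitriangular with $\det A = 1$. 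The $\|\cdot\|_+$ bound in Theorem~\ref{t.bs} is only a convenient sufficient condition for (i); it is not used elsewhere.

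Second, your closing description of the localization mechanism is slightly off. The Kunz--Souillard method as used here does \emph{not} pass through polynomially bounded (Schnol) generalized eigenfunctions and then argue they are $\ell^2$ at $\pm\infty$. It proves summability of $a(n,m)=\E\big(\sup_t|\langle\delta_n,e^{-itH}\delta_m\rangle|\big)$ over $n$ for $m=0,1$, and then applies the RAGE theorem (Proposition~\ref{p.asppcriterion}) to conclude directly that the continuous spectral subspace is orthogonal to $\delta_0,\delta_1$, hence trivial by cyclicity. There is no separate ``matching of two half-line decaying solutions'' step; the two half-lines only enter through the factorization of $\rho_L(n,0;\cdot)$ as an inner product of two products of $S$- and $T$-operators in \eqref{e.ksest3}. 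This does not create a gap in your reduction, but the phrasing suggests a different (Fr\"ohlich--Spencer/fractional-moment style) route than the one actually taken.
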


The organization of the paper is as follows. We work out the generalizations of the Kunz-Souillard method in Section~\ref{s.ksdesc}, where we prove Theorems~\ref{t.bs} and \ref{t.general}. Our applications to limit-periodic potentials are discussed in Section~\ref{sec.lp}, where we prove Theorems~\ref{t.main1}--\ref{t.main2}. Finally, we discuss our applications to quasi-periodic potentials in Section~\ref{sec.qp}, where we prove Theorems~\ref{t.main4}--\ref{t.main6}.

\section{A Generalization of the Kunz-Souillard Method}\label{s.ksdesc}

In this section we prove Theorems~\ref{t.bs} and \ref{t.general}. As was mentioned in the introduction, these theorems are generalizations of the fundamental localization result of Kunz and Souillard from \cite{KS80}. The key additional aspects are the presence of additional random variables and the absence of the independence of the resulting potential values.

Let us begin with Theorem~\ref{t.bs}. Recall the setting of this theorem. The random variables $\{\xi_n\}_{n=-\infty}^{\infty}$ and $\{\chi_n\}_{n=-\infty}^{\infty}$ are independent and the $\{\xi_n\}_{n=-\infty}^{\infty}$ are absolutely continuous with a density $r_n$ that is a rescaling of some fixed bounded and compactly supported density $r$, that is, for each $n \in \Z$, we have $r_n(x)=a_n^{-1}r(a^{-1}_n x)$ for some  $a_n > 0$. The product measure formed from all of these distributions will be denoted by $\mu$, integration with respect to $\mu$ will be denoted by $\mathbb{E} (\cdot)$, and the product space on which $\mu$ is defined will be denoted by $\Upsilon$. Given $\zeta = (\{\xi_n\}_{n=-\infty}^{\infty}, \{\chi_n\}_{n=-\infty}^{\infty}) \in \Upsilon$, we denote the discrete Schr\"odinger operator with potential
\begin{equation}\label{e.thmbspotential}
V(n)=\xi_n + \chi_n + \mathcal{L}_n (\xi_{-|n|+1}, \ldots, \xi_{|n|-1}) , \quad n \in \Z
\end{equation}
by $H_\zeta$. Note that due to the uniform boundedness assumptions, the potential $V$ is bounded.

For $n,m \in \Z$, let us set
$$
a(n,m) = \E \left( \sup_{t\in \R} \left| \left\langle \delta_n , e^{-itH_{\zeta}} \delta_m \right\rangle \right| \right).
$$

Proving decay of $a(n,m)$ in $|n-m|$ establishes some form of dynamical localization. It is well known that dynamical localization implies spectral localization. Concretely, here is the criterion we use to establish pure point spectrum; compare \cite[Theorem~9.21]{CFKS} and \cite[Th\'eor\`eme~IV.4]{KS80}. Because our model is somewhat different, let us give the short argument for the convenience of the reader. We emphasize, however, that the argument does not use any special properties of the model and is the same for the original model and our generalized model and we are merely repeating the argument given in the proof of \cite[Theorem~9.21]{CFKS}.\footnote{Note also that \cite[Theorem~2.1]{S82} claims a similar result with an $\ell^2$-summability assumption. It is not clear whether the statement as formulated there holds as stated. The proof given in \cite{CFKS} and reproduced below needs the $\ell^1$ summability assumption.}

\begin{prop}\label{p.asppcriterion}
Suppose that
$$
\sum_{n \in \Z} |a(n,m)| < \infty
$$
for $m = 0,1$. Then, for $\mu$-almost every $\zeta \in \Upsilon$, the operator $H_\zeta$ has pure point spectrum.
\end{prop}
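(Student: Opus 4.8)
The plan is to use the RAGE-type/Simon–Wolff philosophy in the dynamical form: the quantity $a(n,m)$ is an average over $\zeta$ of a sup-in-time transition amplitude, and its summability in $n$ forces the spectral measures associated to $\delta_0$ and $\delta_1$ to be pure point, almost surely. Concretely, I would first record the elementary bound that for any self-adjoint operator $H$ with cyclic-enough vectors $\delta_0,\delta_1$ (here $\{\delta_0,\delta_1\}$ generates $\ell^2(\Z)$ under $H$ because $H$ is a tridiagonal operator with nonvanishing off-diagonal entries), and any $\psi$, the spectral measure $\rho_\psi$ has no continuous part as soon as $\sum_n \sup_t |\langle \delta_n, e^{-itH}\psi\rangle| < \infty$. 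The mechanism is Wiener's theorem / the RAGE theorem: if this $\ell^1$ sum is finite then $e^{-itH}\psi$ stays essentially within a fixed compact set of $\ell^2(\Z)$ uniformly in $t$ (the tail $\sum_{|n|>R}\sup_t|\langle\delta_n,e^{-itH}\psi\rangle|$ is small), and a vector whose orbit under the unitary group is relatively compact lies in the closure of the span of eigenvectors — equivalently, by RAGE, the continuous-spectral part $P_{\mathrm c}\psi$ has time-averaged escape to infinity, contradicting uniform compactness unless $P_{\mathrm c}\psi = 0$.

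The passage from this deterministic statement to the almost-sure conclusion is where the hypothesis on $a(n,m)$ enters. By Tonelli, $\sum_{n\in\Z} a(n,m) = \E\big(\sum_{n\in\Z}\sup_t|\langle\delta_n,e^{-itH_\zeta}\delta_m\rangle|\big) < \infty$ for $m=0,1$, so for $\mu$-a.e.\ $\zeta$ the inner sum is finite for both $m=0$ and $m=1$ simultaneously. Fix such a $\zeta$. Then by the deterministic step, the spectral measures $\rho_{H_\zeta,\delta_0}$ and $\rho_{H_\zeta,\delta_1}$ are both pure point. It remains to upgrade ``$\delta_0,\delta_1$ have pure point spectral measure'' to ``$H_\zeta$ has pure point spectrum,'' i.e.\ to show $\ell^2(\Z)$ is spanned by eigenvectors. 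This is the standard tridiagonal/Jacobi cyclicity argument: since the off-diagonal entries of $H_\zeta$ equal $1\neq 0$, the closed $H_\zeta$-invariant subspace generated by $\{\delta_0,\delta_1\}$ is all of $\ell^2(\Z)$ (one shows inductively $\delta_n$ lies in it for every $n$), and the pure-point part $\Hk_{\mathrm{pp}}(H_\zeta)$ is a closed invariant subspace containing $\delta_0$ and $\delta_1$ (as those vectors have pure point spectral measures), hence equals $\ell^2(\Z)$.

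I expect the main obstacle — or rather, the main point requiring care rather than genuine difficulty — to be the first, deterministic step: turning the $\ell^1$-summability of $\sup_t|\langle\delta_n,e^{-itH}\psi\rangle|$ into absence of continuous spectrum. The clean way is the RAGE theorem together with the observation that $\ell^1$-in-$n$ control of the time-sup gives, for every $\eta>0$, a radius $R$ with $\sum_{|n|>R}\sup_t|\langle\delta_n,e^{-itH}\psi\rangle|<\eta$, hence $\|(1-\chi_{[-R,R]})e^{-itH}\psi\|_{\ell^2}$ small uniformly in $t$ (using $\|v\|_{\ell^2}^2\le\|v\|_{\ell^1}\|v\|_{\ell^\infty}\le\|v\|_{\ell^1}\|\psi\|$); then the time-averaged quantity $\frac1T\int_0^T\|\chi_{\{|x|>R\}}e^{-itH}P_{\mathrm c}\psi\|^2\,dt\to 0$ from RAGE forces $\|P_{\mathrm c}\psi\|^2 = \lim \frac1T\int_0^T\|e^{-itH}P_{\mathrm c}\psi\|^2 \le \eta'$ for all $\eta'>0$, so $P_{\mathrm c}\psi=0$. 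Since the paper explicitly says this argument is ``the same for the original model and our generalized model'' and merely reproduces \cite[Theorem~9.21]{CFKS}, I would keep this short, cite \cite[Theorem~9.21]{CFKS} and \cite[Th\'eor\`eme~IV.4]{KS80}, and spell out only the Tonelli/Borel–Cantelli-free measure-theoretic step and the Jacobi cyclicity remark, which are the only model-specific ingredients.
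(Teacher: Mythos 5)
Your proposal is correct and follows essentially the same route as the paper: both rest on the RAGE representation $\|\chi_c(H_\zeta)\delta_m\|^2=\lim_N\lim_T\frac{1}{2T}\int_{-T}^T\sum_{|n|\ge N}|\langle\delta_n,e^{-itH_\zeta}\delta_m\rangle|^2\,dt$, dominate the time integral by the time-sup, invoke summability, and close with cyclicity of $\{\delta_0,\delta_1\}$. The only structural difference is that you pass via Tonelli to almost-sure finiteness of $\sum_n\sup_t|\langle\delta_n,e^{-itH_\zeta}\delta_m\rangle|$ and then argue deterministically, whereas the paper bounds $\mathbb{E}\bigl(\|\chi_c(H_\zeta)\delta_m\|^2\bigr)$ directly; note also a small slip in your spelled-out deterministic step — the RAGE formula above should be applied to $e^{-itH}\psi$ itself (not $e^{-itH}P_c\psi$), after which the tail bound $\sum_{|n|\ge N}\sup_t|\cdots|^2\le\sum_{|n|\ge N}\sup_t|\cdots|\to 0$ gives $\|\chi_c\psi\|^2=0$ in one line.
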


\begin{proof}
By the RAGE theorem, compare \cite[Theorem~5.8]{CFKS} and \cite[Formula~(9.10)]{CFKS}, we have that
$$
\|\chi_c(H_\zeta) \delta_m\|^2 = \lim_{N \to \infty} \lim_{T \to \infty} \frac{1}{2T} \int_{-T}^T \sum_{|n| \ge N} \left| \left\langle \delta_n , e^{-itH_{\zeta}} \delta_m \right\rangle \right|^2 \, dt,
$$
where $\chi_c(H_\zeta)$ denotes the orthogonal projection onto the continuous subspace associated with $H_\zeta$. Our goal is to show that $\|\chi_c(H_\zeta) \delta_m\|^2$ is zero for $m = 0,1$ and $\mu$-almost every $\zeta \in \Upsilon$.

For $m = 0,1$, we have
\begin{align*}
\mathbb{E}\left( \|\chi_c(H_\zeta) \delta_m\|^2 \right) & = \mathbb{E}\left( \lim_{N \to \infty} \lim_{T \to \infty} \frac{1}{2T} \int_{-T}^T \sum_{|n| \ge N} \left| \left\langle \delta_n , e^{-itH_{\zeta}} \delta_m \right\rangle \right|^2 \, dt \right) \\
& \le \lim_{N \to \infty} \sum_{|n| \ge N} \mathbb{E}\left( \sup_{t \in \R} \left| \left\langle \delta_n , e^{-itH_{\zeta}} \delta_m \right\rangle \right|^2 \right) \\
& \le \lim_{N \to \infty} \sum_{|n| \ge N} \mathbb{E}\left( \sup_{t \in \R} \left| \left\langle \delta_n , e^{-itH_{\zeta}} \delta_m \right\rangle \right| \right) \\
& = \lim_{N \to \infty} \sum_{|n| \ge N} |a(m,n)| \\
& = 0,
\end{align*}
where we used $\left| \left\langle \delta_n , e^{-itH_{\zeta}} \delta_m \right\rangle \right| \le 1$ in the third step and our assumption in the last step.

Since $\|\chi_c(H_\zeta) \delta_m\|^2 \ge 0$, it follows that $\|\chi_c(H_\zeta) \delta_m\|^2$ is zero for $m = 0,1$ and $\mu$-almost every $\zeta \in \Upsilon$, concluding the proof by cyclicity of $\{ \delta_0 , \delta_1 \}$.
\end{proof}

The next step is to restrict the operators $H_\zeta$ to finite intervals. For $L\in\Z_+$, we denote by $H_{\zeta}^{(L)}$ the restriction of $H_{\zeta}$ to $\ell^2(\{-L,\ldots, L\})$ with Dirichlet boundary conditions.

For $|n|, |m| \le L$, we define $a_L(n,m)$ to be $a(n,m)$ with $H_{\zeta}$ replaced by $H_{\zeta}^{(L)}$, that is,
$$
a_L(n,m) = \E \left( \sup_{t\in \R} \left| \left\langle \delta_n , e^{-itH_{\zeta}^{(L)}} \delta_m \right\rangle \right| \right).
$$
It is easy to see that $H_{\zeta}^{(L)}$ has $2L+1$ real simple eigenvalues
$$
E_{\zeta}^{L,1} < E_{\zeta}^{L,2} < \cdots < E_{\zeta}^{L,2L+1}.
$$
For each $k$, let $\varphi_{\zeta}^{L,k}$ denote a normalized eigenvector corresponding to $E_{\zeta}^{L,k}$.  We now define
\begin{equation}\label{e.rhoLdef}
\rho_L(n,m) = \E \left( \sum_{k=1}^{2L+1} \left| \left\langle \delta_n, \varphi_{\zeta}^{L,k} \right\rangle \right| \left| \left\langle \delta_m , \varphi_{\zeta}^{L,k} \right\rangle \right| \right).
\end{equation}

The statements in the following lemma are easy to prove; compare the discussion in \cite[pp.~192--193]{CFKS}. Again, for the sake of completeness and the convenience of the reader, we reproduce the short arguments.

\begin{lemma} \label{l.ks.est1}
{\rm (a)} For $n,m \in \Z$, we have
\begin{equation}\label{e.ksest1}
a(n,m) \leq \liminf_{L \to \infty} a_L(n,m).
\end{equation}
{\rm (b)} If $|n|,|m| \leq L$, then
\begin{equation}\label{e.ksest2}
a_L(n,m) \leq \rho_L(n,m).
\end{equation}
\end{lemma}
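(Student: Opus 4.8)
The plan is to prove both inequalities by elementary functional-analytic manipulations, exactly as in the classical Kunz--Souillard setup.

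For part (a), I would use the fact that the finite-volume operators $H_\zeta^{(L)}$ converge to $H_\zeta$ in the strong resolvent sense as $L \to \infty$, which gives $e^{-itH_\zeta^{(L)}} \delta_m \to e^{-itH_\zeta} \delta_m$ strongly for each fixed $t$, and in particular $\langle \delta_n, e^{-itH_\zeta^{(L)}} \delta_m \rangle \to \langle \delta_n, e^{-itH_\zeta} \delta_m \rangle$ for each $n,m$. Hence for each fixed $t$ we get $|\langle \delta_n, e^{-itH_\zeta} \delta_m \rangle| \le \liminf_{L\to\infty} |\langle \delta_n, e^{-itH_\zeta^{(L)}} \delta_m \rangle| \le \liminf_{L\to\infty} \sup_{s} |\langle \delta_n, e^{-isH_\zeta^{(L)}} \delta_m \rangle|$. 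Taking the supremum over $t$ on the left (the right-hand side no longer depends on $t$) and then applying Fatou's lemma to pass the $\liminf$ outside the expectation $\mathbb{E}(\cdot)$ yields $a(n,m) = \mathbb{E}(\sup_t |\langle\delta_n, e^{-itH_\zeta}\delta_m\rangle|) \le \liminf_{L} \mathbb{E}(\sup_t |\langle\delta_n, e^{-itH_\zeta^{(L)}}\delta_m\rangle|) = \liminf_L a_L(n,m)$, which is \eqref{e.ksest1}.

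For part (b), I would expand in the eigenbasis of $H_\zeta^{(L)}$. Since $\{\varphi_\zeta^{L,k}\}_{k=1}^{2L+1}$ is an orthonormal basis of $\ell^2(\{-L,\ldots,L\})$, for $|n|,|m| \le L$ we have the spectral representation $\langle \delta_n, e^{-itH_\zeta^{(L)}} \delta_m \rangle = \sum_{k=1}^{2L+1} e^{-itE_\zeta^{L,k}} \langle \delta_n, \varphi_\zeta^{L,k}\rangle \overline{\langle \delta_m, \varphi_\zeta^{L,k}\rangle}$. By the triangle inequality, $|\langle \delta_n, e^{-itH_\zeta^{(L)}} \delta_m \rangle| \le \sum_{k=1}^{2L+1} |\langle \delta_n, \varphi_\zeta^{L,k}\rangle|\, |\langle \delta_m, \varphi_\zeta^{L,k}\rangle|$ for every $t \in \R$ (and the right-hand side is $t$-independent, so the same bound holds for $\sup_t$). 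Taking $\mathbb{E}(\cdot)$ of both sides and recalling the definition \eqref{e.rhoLdef} of $\rho_L(n,m)$ gives \eqref{e.ksest2} immediately.

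The only point requiring any care is the strong-resolvent convergence used in (a): for bounded Jacobi matrices restricted with Dirichlet boundary conditions this is standard (the restrictions converge strongly on the dense subspace of finitely supported vectors, and uniform boundedness of the operators upgrades this to strong resolvent convergence, hence to strong convergence of the unitary groups $e^{-itH^{(L)}}$ by the Trotter--Kato theorem). I expect no genuine obstacle here; the lemma is purely a bookkeeping step that reduces the analysis of $a(n,m)$ to the finite-volume quantities $\rho_L(n,m)$, which are the objects actually estimated in the subsequent Kunz--Souillard machinery.
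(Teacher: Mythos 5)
Your argument is correct and follows essentially the same route as the paper: in part (a) the paper likewise passes from strong convergence of $H_\zeta^{(L)}$ to $H_\zeta$ to strong convergence of the unitary groups and then invokes Fatou's lemma, and in part (b) it expands $\delta_m$ in the eigenbasis of $H_\zeta^{(L)}$ and applies the triangle inequality before taking the expectation. Your additional remarks about strong resolvent convergence and Trotter--Kato merely make explicit the paper's terse ``thus, $e^{-itH_\zeta^{(L)}}$ converges strongly'' step.
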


\begin{proof}
(a) Regard $H_\zeta^{(L)}$ as an operator on $\ell^2(\Z)$ (i.e., set $\langle \delta_j ,  H_\zeta^{(L)} \delta_k \rangle = 0$ if $|j| > L$ or $|k| > L$). Clearly, $H_\zeta^{(L)}$ converges strongly to $H_\zeta$ as $L \to \infty$. Thus, $e^{-itH_\zeta^{(L)}}$ converges strongly to $e^{-itH_\zeta}$ as $L \to \infty$. Consequently, the assertion follows by Fatou's Lemma.

(b) We have
\begin{align*}
a_L(n,m) & = \E \left( \sup_{t \in \R} \left| \left\langle \delta_n , e^{-itH_\zeta^{(L)}} \delta_m \right\rangle \right| \right) \\
& = \E \left( \sup_{t \in \R} \Big| \Big\langle \delta_n , e^{-itH_\zeta^{(L)}} \sum_k \left\langle \varphi_\zeta^{L,k} , \delta_m \right\rangle \varphi_\zeta^{L,k} \Big\rangle \Big| \right) \\
& = \E \left( \sup_{t \in \R} \Big|  \sum_k e^{-itE_{\zeta}^{L,k}} \left\langle \delta_n , \varphi_\zeta^{L,k} \right\rangle \left\langle \delta_m , \varphi_\zeta^{L,k} \right\rangle \Big| \right) \\
& \le \E \left( \sum_k \left| \left\langle \delta_n , \varphi_\zeta^{L,k} \right\rangle \right| \, \left| \left\langle \delta_m , \varphi_\zeta^{L,k} \right\rangle \right| \right) \\
& = \rho_L(n,m),
\end{align*}
as claimed.
\end{proof}

Thus one seeks upper bounds for $\rho_L(n,m)$ in terms of $|n - m|$ that are uniform in $L$ so that they survive as $L \to \infty$. Let us consider the case $m = 0$ and $n > 0$ explicitly; the other cases are similar. The quantity $\rho_L(n,0)$ will be estimated in terms of suitable integral operators which arise through a certain change of variables.

The expectation $\E(\cdot)$ in the definition of $\rho_L(n,m)$ is given by integration over all the random variables $\{\xi_j\}_{j=-\infty}^{\infty}$ and $\{\chi_j\}_{j=-\infty}^{\infty}$. Two important remarks are in order:

\medskip

(i) We will freeze all the $\chi_j$'s and regard them as a background potential. The estimates we prove will be uniform in the choice of $\chi = \{\chi_j\}_{j=-\infty}^{\infty}$ due to the uniform boundedness of these random variables. Thus, by Fubini the estimates we prove for frozen $\chi_j$'s extend to estimates for $\rho_L(n,m)$.

(ii) Freezing the $\chi_j$'s, the operator $H_\zeta^{(L)}$ does not depend on all of the $\xi_j$'s, but rather only on $\{\xi_j\}_{j=-L}^{L}$. Thus, the inner integral we need to consider (for frozen $\chi_j$'s) is a $2L+1$-fold iterated integral. In other words, we consider
$$
\rho_L(n,m;\chi) = \int \cdots \int \left( \sum_{k=1}^{2L+1} \left| \left\langle \delta_n, \varphi_{\zeta}^{L,k} \right\rangle \right| \left| \left\langle \delta_m , \varphi_{\zeta}^{L,k} \right\rangle \right| \right) \prod_{j = -L}^L r_j(v_j) \, dv_{-L} \cdots dv_L,
$$
where $\zeta = (\chi,\xi)$, and $\xi_j = v_j$ for $|j| \le L$. We will apply a suitable change of variables to this integral.

\medskip

Fix $E \in \R$, and define
\begin{align*}
\left( U_0 f\right)(x) & = |x|^{-1} f \left( |x|^{-1} \right), \\
\left( S_E^{(j)} f \right)(x) & =  \int_{\R} \! r_j \left( E - x - y^{-1} \right) f(y) \, dy, \\
\left( T_E^{(j)} f \right)(x) & =  \int_{\R} \! r_j \left( E - x - y^{-1} \right) |y|^{-1} f(y) \, dy.
\end{align*}
By our uniform boundedness assumption, there exists a compact interval $\Sigma_0$ that contains all spectra $\sigma(H_\zeta)$ and all eigenvalues $E_{\zeta}^{L,k}$.

\begin{lemma}\label{l.ks1}
Fix $L \in \Z_+$ and $n$ with $0< n \leq L$. Set $\phi_r(x) = r_L(E - \chi_L - x)$ and $\phi_\ell(x) = r_{-L}(E - \chi_{-L} - x)$. Then, we have
\begin{equation}\label{e.ksest3}
\rho_L(n,0; \chi)  = \int_{\Sigma_0} \left\langle T_{E - \chi_1}^{(1)} \cdots T_{E - \chi_{n-1}}^{(n-1)} S_{E - \chi_{n}}^{(n)} \cdots S_{E - \chi_{L-1}}^{(L-1)} \phi_r , U S_{E - \chi_{0}}^{(0)} \cdots S_{E - \chi_{-L+1}}^{(-L+1)} \phi_\ell \right\rangle_{L^2(\R)} \, dE.
\end{equation}
\end{lemma}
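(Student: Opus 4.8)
The plan is to follow the change of variables of Kunz and Souillard, keeping the $\chi_j$'s frozen as explained before the lemma; it will suffice to treat the case $n>0$, $m=0$, the remaining cases being identical after relabelling. The argument has three parts: first, express each normalized eigenfunction of $H_\zeta^{(L)}$ through the two half-line transfer-matrix solutions of the difference equation; second, turn the sum over eigenvalues into an integral over the energy $E$ by means of the eigenvalue condition $u^+_E(-L-1)=0$ and a Wronskian--derivative identity; third, for each fixed $E$, pass to Riccati variables, which will turn the single-site densities $r_j$ exactly into the kernels of the operators $S^{(j)}_{E-\chi_j}$ together with the weighted variants $T^{(j)}_{E-\chi_j}$ and the gluing operator $U$ with $(Uf)(x)=|x|^{-1}f(|x|^{-1})$.

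For a fixed configuration let $u^+_E$ be the solution of $u(j+1)+u(j-1)+V(j)u(j)=Eu(j)$ with $u^+_E(L+1)=0$ and $u^+_E(L)=1$, and let $u^-_E$ be the analogous solution normalized at $-L$. Then $E$ is an eigenvalue of $H_\zeta^{(L)}$ exactly when $W_E:=u^+_E(-L-1)=0$, in which case $u^+_E=c\,u^-_E$ for a scalar $c$ and the corresponding normalized eigenfunction equals, up to sign, $u^+_E/\|u^+_E\|$. Writing $W'(E):=\partial_E W_E$, a discrete Green (summation-by-parts) identity applied to $u^+_E$ and $\partial_E u^+_E$ gives $\|u^+_{E_k}\|^2=|u^+_{E_k}(-L)|\,|W'(E_k)|$ at each eigenvalue $E_k:=E^{L,k}_\zeta$; together with $u^+_{E_k}(-L)=c$ and $u^+_{E_k}(0)=c\,u^-_{E_k}(0)$ this will give
\[
\sum_{k=1}^{2L+1}|\langle\delta_n,\varphi^{L,k}_\zeta\rangle\langle\delta_0,\varphi^{L,k}_\zeta\rangle|=\sum_k\frac{|u^+_{E_k}(n)|\,|u^-_{E_k}(0)|}{|W'(E_k)|}=\int_{\Sigma_0}|u^+_E(n)|\,|u^-_E(0)|\,\delta(W_E)\,dE,
\]
the last equality because $E\mapsto W_E$ is a polynomial of degree $2L+1$ with simple real zeros $E_k$, all in $\Sigma_0$. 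By Tonelli one may interchange this $E$-integral with the integral over the $v_j$'s, and it then remains to prove, for each fixed $E$, that the inner $(2L+1)$-fold integral of $|u^+_E(n)|\,|u^-_E(0)|\,\delta(W_E)\prod_{j=-L}^L r_j(v_j)$ equals the integrand of \eqref{e.ksest3}.

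Fix $E$, and on the full-measure set on which all the $u^\pm_E(j)$ are nonzero introduce the Riccati variables $x_j:=u^+_E(j-1)/u^+_E(j)$ for $1\le j\le L$ and $y_j:=u^-_E(j+1)/u^-_E(j)$ for $-L\le j\le 0$. The difference equation gives $v_j=(E-\chi_j)-x_j-x_{j+1}^{-1}$ for $1\le j\le L$ (with $x_{L+1}^{-1}:=0$) and $v_j=(E-\chi_j)-y_j-y_{j-1}^{-1}$ for $-L\le j\le 0$ (with $y_{-L-1}^{-1}:=0$). In each relation $v_j$ enters only with coefficient $-1$ besides an additive function of the other new variables, so the map $(v_{-L},\dots,v_L)\mapsto(x_1,\dots,x_L,y_{-L},\dots,y_0)$ is a composition of one-variable shears and has Jacobian $1$; under it each $r_j(v_j)$ becomes precisely the kernel occurring in $S^{(j)}_{E-\chi_j}$, and the two outermost densities become $\phi_r(x_L)=r_L((E-\chi_L)-x_L)$ and $\phi_\ell(y_{-L})=r_{-L}((E-\chi_{-L})-y_{-L})$. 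Telescoping gives $u^+_E(n)=x_{n+1}\cdots x_L$, $u^+_E(1)=x_2\cdots x_L$, $u^-_E(0)=y_{-L}\cdots y_{-1}$, and, from the Wronskian of $u^+_E$ and $u^-_E$ evaluated at $0$, $W_E=u^+_E(1)\,u^-_E(0)\,(x_1y_0-1)$, whence
\[
|u^+_E(n)|\,|u^-_E(0)|\,\delta(W_E)=\frac{1}{|x_2\cdots x_n|}\cdot\frac{1}{|y_0|}\,\delta(x_1-y_0^{-1}).
\]
Performing the iterated integral from the boundaries inward will then produce \eqref{e.ksest3}: the densities with $n\le j\le L-1$ assemble into $S^{(n)}_{E-\chi_n}\cdots S^{(L-1)}_{E-\chi_{L-1}}\phi_r$; the $n-1$ surviving weights $|x_2|^{-1},\dots,|x_n|^{-1}$ attach to the densities with $1\le j\le n-1$, producing $T^{(1)}_{E-\chi_1}\cdots T^{(n-1)}_{E-\chi_{n-1}}$; the densities with $-L\le j\le 0$ assemble into $S^{(0)}_{E-\chi_0}\cdots S^{(-L+1)}_{E-\chi_{-L+1}}\phi_\ell$; and integrating out $x_1$ against $\delta(x_1-y_0^{-1})$ together with the remaining factor $|y_0|^{-1}$ implements exactly $(Uf)(x)=|x|^{-1}f(|x|^{-1})$, with $x_1$ becoming the variable of the $L^2(\R)$ pairing.

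The main obstacle will be the bookkeeping in this last step: checking that the factors $u^+_E(n)$, $u^-_E(0)$, $\|u^+_E\|^2$ and the $\delta$-function distribute exactly onto the asserted operators --- in particular that the surviving weight is $|x_j|^{-1}$ precisely for $2\le j\le n$, so that one gets the plain $S^{(j)}$ for $j\ge n$ and the weighted $T^{(j)}$ for $j\le n-1$, and that the gluing factor is exactly $U$, with the signs handled by the absolute values in the definitions of $S^{(j)}$, $T^{(j)}$, $U$. One also has to establish the Wronskian--derivative identity $\|u^+_{E_k}\|^2=|u^+_{E_k}(-L)|\,|W'(E_k)|$, and to note that the Riccati substitution degenerates on the $\mu$-null set where some $u^\pm_E(j)$ vanishes, so the computation is performed on its complement. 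Finally, when the corrections $\mathcal{L}_j$ are present they combine additively with $\chi_j$ into the shift at site $j$; since $\mathcal{L}_j$ depends only on sites strictly closer to the origin, one orders the $2L+1$ integrations from the boundary inward so that $\mathcal{L}_j$ is a frozen parameter at the moment $v_j$ is processed, leaving the argument unchanged.
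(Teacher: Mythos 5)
Your proof is correct but follows a genuinely different route from the paper's. The paper performs a single joint change of variables $\{v_j\}_{j=-L}^{L} \leftrightarrow \{x_{-L},\dots,x_{-1},E,x_1,\dots,x_L\}$ in which the energy $E$ is itself one of the new coordinates (ranging over the eigenvalues of the finite box); it then computes the Jacobian explicitly as $\varphi^{L,k}_{\tilde V}(0)^{-2}$ by iterated expansion by minors and invokes oscillation theory (partitioning $\mathbb{R}^{2L}$ by the sign pattern of the $x$'s, so that each region corresponds to a fixed eigenvalue index $k$) to collapse the sum over eigenvalues into a single integral over $\Sigma_0\times\mathbb{R}^{2L}$. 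You instead keep $E$ as a parameter, introduce the two boundary solutions $u^\pm_E$, convert the sum over eigenvalues into an $E$-integral against $\delta(W_E)$, establish the discrete Wronskian--derivative identity $\|u^+_{E_k}\|^2 = |u^+_{E_k}(-L)|\,|W'(E_k)|$ to express the eigenfunction normalization, and then change variables at fixed $E$ to the $2L+1$ Riccati ratios $(x_1,\dots,x_L,y_{-L},\dots,y_0)$, recovering the $U$-gluing from the $\delta$-factor via the identity $W_E = u^+_E(1)\,u^-_E(0)\,(x_1y_0-1)$. Both routes produce the same formula; yours avoids oscillation theory and the explicit $(2L+1)\times(2L+1)$ determinant at the cost of the Wronskian--derivative identity and some distribution-theoretic care in the $\delta$-function manipulation and the Tonelli interchange (standard, but worth making precise via a co-area or regularization argument). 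Two remarks. First, after the substitution the argument of $r_j$ contains the correction $-\mathcal{L}_j(\cdot)$ alongside $-\chi_j$, so strictly the operators are $S^{(j)}_{E-\chi_j-\mathcal{L}_j(\cdot)}$ and $T^{(j)}_{E-\chi_j-\mathcal{L}_j(\cdot)}$; the paper makes the same simplification, and it is harmless because the bounds of Lemma~\ref{l.ks4} are uniform in the energy shift, but you should be explicit about it. Second, your iterated one-variable-shear argument for the Jacobian is fine, but the cleanest way to see the unit Jacobian in the presence of the $\mathcal{L}_j$'s is to factor the map as the unipotent linear map $v \mapsto v + \mathcal{L}(v)$ (determinant one, since $\mathcal{L}_j$ depends only on sites with strictly smaller $|j|$) followed by the $\mathcal{L}$-free Riccati substitution (triangular with $\pm1$ on the diagonal); this is in the spirit of the $A\circ F$ decomposition the paper uses.
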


\begin{proof}
As pointed out earlier, $\rho_L(m,0)$ can be expressed by a $(2L+1)$-fold integral:
\begin{equation}\label{e.ks8}
\rho_L(n,0;\chi) = \int \cdots \int \left( \sum_{k=1}^{2L+1} \left| \left\langle \delta_n, \varphi_{\tilde V}^{L,k} \right\rangle \right| \left| \left\langle \delta_0 , \varphi_{\tilde V}^{L,k} \right\rangle \right| \right) \prod_{j = -L}^L r_j(v_j) \, dv_{-L} \cdots dv_L,
\end{equation}
where
$$
\tilde V = \begin{pmatrix} v_{-L} + \chi_{-L} + \mathcal{L}_{-L}(v_{-L+1}, \ldots, v_{L-1}) \\ \vdots \\ v_{-1} + \chi_{-1} +\mathcal{L}_{-1}(v_0) \\ v_0 + \chi_0 \\ v_1 + \chi_1 + \mathcal{L}_1(v_0) \\ \vdots \\ v_L + \chi_{L} + \mathcal{L}_L(v_{-L+1}, \ldots, v_{L-1}) \end{pmatrix}.
$$

To make the notation more manageable, from now on we will omit the arguments of the linear functionals $\mathcal{L}_n$.  Denote the eigenvalues (listed in increasing order) and corresponding normalized eigenvectors of
$$
\begin{pmatrix} \tilde V_{-L} & 1 & &&& \\ 1 & \tilde V_{-L+1} & 1 &&& \\ & \ddots & \ddots & \ddots &&  \\ && \ddots & \ddots & \ddots & \\ &&& 1 & \tilde V_{L-1} & 1 \\ &&&& 1 & \tilde V_{L} \end{pmatrix}
$$
by $\{ E_{\tilde V}^{L,k} \}_{-L \le k \le L}$ and $\{ \varphi_{\tilde V}^{L,k} \}_{-L \le k \le L}$, respectively.

Thus, if $E$ is $E_{\tilde V}^{L,k}$ and $u$ is $\varphi_{\tilde V}^{L,k}$, then we have
\begin{equation}\label{e.ks5}
u(n+1) + u(n-1) + (v_n + \chi_n + \mathcal{L}_n) u(n) = E u(n)
\end{equation}
for $-L \le n \le L$, where $u(-L-1) = u(L+1) = 0$. We rewrite \eqref{e.ks5} as
\begin{equation}\label{e.ks6}
v_n = E - \chi_n - \frac{u(n+1)}{u(n)} - \frac{u(n-1)}{u(n)}-\mathcal{L}_n.
\end{equation}
This motivates a change of variables,
\begin{equation}\label{e.ks7}
\{ v_n \}_{n = -L}^L \quad \longleftrightarrow \quad \{ x_{-L} , \ldots , x_{-1} , E , x_1 , \ldots , x_L \},
\end{equation}
where
$$
E = E_{\tilde V}^{L,k}
$$
and
$$
x_n = \begin{cases} \frac{\varphi_{\tilde V}^{L,k} (n+1)}{\varphi_{\tilde V}^{L,k} (n)} & n < 0 \\ \frac{\varphi_{\tilde V}^{L,k} (n-1)}{\varphi_{\tilde V}^{L,k} (n)} & n > 0, \end{cases}
$$
so that
$$
v_n = \begin{cases} E - \chi_n - x_{n-1}^{-1} - x_n -\mathcal{L}_n& n < 0 \\ E - \chi_n - x_{-1}^{-1} - x_1^{-1} & n = 0 \\ E -\chi_n - x_{n+1}^{-1} - x_n-\mathcal{L}_n & n > 0, \end{cases}
$$
with $x_{-L-1}^{-1} = x_{L+1}^{-1} = 0$ (which is natural in view of the definition above).

Notice that this change of variables can be represented as a composition $A\circ F$, where
$$
F(x, E)=\begin{cases} E - \chi_n - x_{n-1}^{-1} - x_n & n < 0, \\ E - \chi_n - x_{-1}^{-1} - x_1^{-1} & n = 0, \\ E - \chi_n - x_{n+1}^{-1} - x_n & n > 0, \end{cases}
$$
and $A : \mathbb{R}^{2L+1} \to \mathbb{R}^{2L+1}$ is the linear map given by
$$
\begin{pmatrix}
  W_{-L} \\
 \vdots \\
 W_0 \\
 \vdots \\
W_L \\
\end{pmatrix}
\to
\begin{pmatrix}
  W_{-L} - \mathcal{L}_{-L}(W_{-L}, \ldots, W_0, \ldots, W_L)\\
 \vdots \\
 W_0 \\
 \vdots \\
W_L - \mathcal{L}_{L}(W_{-L}, \ldots, W_0, \ldots, W_L) \\
\end{pmatrix}
$$
Notice that $\det A=1$.

We wish to rewrite \eqref{e.ks8} using this change of variables. In order to do this, we have to determine the Jacobian of the change of variables \eqref{e.ks7}. Since $\det A=1$, it is equal to the Jacobian of $F$. We claim that it satisfies
\begin{align}\label{e.ks9}
\det J & = \varphi_{\tilde V}^{L,k} (0)^{-2}.
\end{align}

To prove \eqref{e.ks9}, we note that
$$
J = \begin{pmatrix} -1 & x_{-L}^{-2} &&&&&&& \\ & -1 & x_{-L+1}^{-2} &&&&&& \\ && \ddots & \ddots &&&&& \\ &&& -1 & x_{-1}^{-2} &&&& \\ 1 & 1 & \cdots & 1 & 1 & 1 & \cdots & 1 & 1 \\ &&&& x_1^{-2} & - 1 &&& \\ &&&&& x_2^{-2} & -1 && \\ &&&&&& \ddots & \ddots & \\ &&&&&&& x_{L}^{-2} & -1 \end{pmatrix}
$$
where the row index runs from $x_{-L}$ at the top to $x_L$ at the bottom (with $E$ in the middle) and the column index runs from $v_{-L}$ on the left to $v_L$ on the right. Since this is the same matrix that arises in the standard Kunz-Souillard method, one can now proceed as in that case. We refer the reader to \cite{CFKS, DF16, KS80} for details, but note that iterated expansion by minors yields that
\begin{align*}
\det J & = 1 + x_1^{-2} \left\{ 1 + x_2^{-2} \left\{ 1 + \cdots x_{L-1}^{-2} \left\{ 1 + x_L^{-2} \right\} \cdots \right\} \right\} \\
& \qquad  + x_{-1}^{-2} \left\{ 1 + x_{-2}^{-2} \left\{ 1 + \cdots x_{-L+1}^{-2} \left\{ 1 + x_{-L}^{-2} \right\} \cdots \right\} \right\} \\
& = \frac{\varphi_{\tilde V}^{L,k} (0)^2}{\varphi_{\tilde V}^{L,k} (0)^2} + \sum_{n=-1}^{-L} \frac{\varphi_{\tilde V}^{L,k} (n)^2}{\varphi_{\tilde V}^{L,k} (0)^2} + \sum_{n=1}^{L} \frac{\varphi_{\tilde V}^{L,k} (n)^2}{\varphi_{\tilde V}^{L,k} (0)^2} \\
& = \frac{\|\varphi_{\tilde V}^{L,k}\|^2}{\varphi_{\tilde V}^{L,k} (0)^2} \\
& = \varphi_{\tilde V}^{L,k} (0)^{-2},
\end{align*}
since $\varphi_{\tilde V}^{L,k}$ is normalized. This proves \eqref{e.ks9}.

We also note that
\begin{equation}\label{e.ks10}
\left| \varphi_{\tilde V}^{L,k} (n) \right| \left| \varphi_{\tilde V}^{L,k} (0) \right|^{-1} = \left| x_1^{-1} \cdots x_n^{-1} \right|.
\end{equation}

We are now ready to carry out the substitution \eqref{e.ks7} in the formula \eqref{e.ks8} for $\rho_L(n,0;\chi)$, using the identities \eqref{e.ks9} and \eqref{e.ks10}. For $0 \le k \le 2L$, let
$$
R_k = \{ (x,E) : x \in \R^{2L}, E \in \Sigma_0, \, \text{precisely $k$ components of $x$ are negative} \}.
$$
\begin{align*}
\rho_L(n,0;\chi) & = \int \cdots \int \left( \sum_k \left| \left\langle \delta_n , \varphi_{\tilde V}^{L,k} \right\rangle \right| \, \left| \left\langle \delta_0 , \varphi_{\tilde V}^{L,k} \right\rangle \right| \right) \prod_{j = -L}^L r_j(v_j) \, dv_{-L} \cdots dv_L \\
& = \sum_{k = -L}^L \int \cdots \int \left| \varphi_{\tilde V}^{L,k}(n) \right| \, \left| \varphi_{\tilde V}^{L,k} (0) \right| \prod_{j = -L}^L r_j(v_j) \, dv_{-L} \cdots dv_L \\
& = \sum_{k = -L}^L \int \cdots \int \left| \varphi_{\tilde V}^{L,k}(n) \right| \, \left| \varphi_{\tilde V}^{L,k} (0) \right|^{-1} \prod_{j = -L}^L r_j(v_j) \left| \varphi_{\tilde V}^{L,k} (0) \right|^2 \, dv_{-L} \cdots dv_L \\
& = \sum_{k = 0}^{2L} \int_{R_k} \left| x_1^{-1} \cdots x_n^{-1} \right| \left( \prod_{j=-1}^{-L} r_j (E - \chi_j - x_{j-1}^{-1} - x_j) \right) r_0(E - x_1^{-1} - x_{-1}^{-1}) \\
& \qquad \times \left( \prod_{j=1}^{L} r_j (E - \chi_j - x_{j+1}^{-1} - x_j) \right) \, dx_{-L} \cdots dx_{-1} \, dx_1 \cdots dx_L \, dE \\
& = \int_{\Sigma_0} \int_{\R^{2L}} \left| x_1^{-1} \cdots x_n^{-1} \right| \left( \prod_{j=-1}^{-L} r_j (E - \chi_j - x_{j-1}^{-1} - x_j) \right) r_0(E - x_1^{-1} - x_{-1}^{-1}) \\
& \qquad \times \left( \prod_{j=1}^{L} r_j (E - \chi_j - x_{j+1}^{-1} - x_j) \right) \, dx_{-L} \cdots dx_{-1} \, dx_1 \cdots dx_L \, dE.
\end{align*}

Here, the fourth step follows by oscillation theory; see \cite{DF16} for a detailed explanation of this argument.

For fixed $E \in \Sigma_0$, it follows from the definitions that the inner integral has the required form:
\begin{align*}
& \left\langle T_{E - \chi_1}^{(1)} \cdots T_{E - \chi_{n-1}}^{(n-1)} S_{E - \chi_{n}}^{(n)} \cdots S_{E - \chi_{L-1}}^{(L-1)} \phi_r , U S_{E - \chi_{0}}^{(0)} \cdots S_{E - \chi_{-L+1}}^{(-L+1)} \phi_\ell \right\rangle_{L^2(\R)} \\
& \quad = \int_{\R^{2L}} \left| x_1^{-1} \cdots x_n^{-1} \right| \left( \prod_{j=-1}^{-L} r_j (E - \chi_j - x_{j-1}^{-1} - x_j) \right) r_0(E - x_1^{-1} - x_{-1}^{-1}) \\
& \qquad \times \left( \prod_{j=1}^{L} r_j (E - \chi_j - x_{j+1}^{-1} - x_j) \right) \, dx_{-L} \cdots dx_{-1} \, dx_1 \cdots dx_L
\end{align*}
The formula for $\rho_L(n,0)$ claimed in the lemma therefore follows.
\end{proof}

Given this representation, we are naturally interested in estimates for the integral operators $S^{(\cdot)}_{\cdot}$ and $T^{(\cdot)}_{\cdot}$. In the following lemma, for which we refer the reader to \cite{S82},  we denote the norm of an operator $T : L^p(\R) \to L^q(\R)$ by $\|T\|_{p,q}$.

\begin{lemma}\label{l.ks4}
{\rm (a)} $\sup \{ \|S_{E - \chi_{j}}^{(j)}\|_{1,1} : E \in \Sigma_0, \; j \in \Z, \; (\xi,\chi) \in \mathrm{supp} \, \mu \} \le 1$. \\
{\rm (b)} $\sup \{ \|S_{E - \chi_{j}}^{(j)}\|_{1,2} : E \in \Sigma_0, \; (\xi,\chi) \in \mathrm{supp} \, \mu \} \le \|r_j\|_\infty^{1/2} = a_j^{-1/2} \|r\|_\infty^{1/2}$. \\
{\rm (c)} $\sup \{ \|T_{E - \chi_{j}}^{(j)}\|_{2,2} : E \in \Sigma_0, \; j \in \Z, \; (\xi,\chi) \in \mathrm{supp} \, \mu \} \le 1$. \\
{\rm (d)} For $\lambda \ge 0$ sufficiently small, $\hat r$, the Fourier transform of $r$, obeys
$$
\sup_{|k| \ge \lambda} |\hat r(k)|^2 \le e^{- c |\lambda|^2}
$$
for a suitable $c = c(r) > 0$. \\
{\rm (e)} There exist $K_0 = K_0(r), \lambda = \lambda(r), c = c(r) > 0$ such that for every $j$, we have
\begin{align*}
\|T_{E - \chi_j}^{(j)} T_{E - \chi_{j-1}}^{(j-1)}\|_{2,2} & \le \left( \frac{15}{16} + \frac{1}{16} \sup_{|k| \ge K_0 \min\{a_{j}, a_{j-1}\}} |\hat r(k)|^2 \right)^{1/2} \\
& \le e^{- c K_0^2 \min\{a_{j}^2, a_{j-1}^2, \lambda \}}.
\end{align*}
\end{lemma}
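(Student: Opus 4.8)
The plan is to dispatch (a)--(c) by soft functional-analytic arguments, reduce (d) to elementary Fourier analysis, and reserve the real work for (e). For (a): fixing $y$ and substituting in the $x$-integral, $\int_\R r_j\big((E-\chi_j)-x-y^{-1}\big)\,dx=\int_\R r_j=1$, so Tonelli gives $\|S_{E-\chi_j}^{(j)}f\|_1\le\|f\|_1$. For (b): the trivial bound $\|S_{E-\chi_j}^{(j)}f\|_\infty\le\|r_j\|_\infty\|f\|_1$ says $\|S_{E-\chi_j}^{(j)}\|_{1,\infty}\le a_j^{-1}\|r\|_\infty$, and interpolating this with (a) by Riesz--Thorin --- domain fixed at $L^1$, target moved from $L^1$ to $L^\infty$ --- yields $\|S_{E-\chi_j}^{(j)}\|_{1,2}\le a_j^{-1/2}\|r\|_\infty^{1/2}$. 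For (c): the substitution $y\mapsto y^{-1}$ exhibits $T_{E-\chi_j}^{(j)}$ as the composition $\mathcal R_{E-\chi_j}\circ A_{r_j}\circ \mathcal U$, where $(\mathcal R_c h)(x)=h(c-x)$ and $(\mathcal U f)(x)=|x|^{-1}f(x^{-1})$ are unitary involutions of $L^2(\R)$ and $A_{r_j}$ is convolution by $r_j$; since on the Fourier side $A_{r_j}$ is multiplication by $k\mapsto\hat r(a_jk)$ and $\|\hat r\|_\infty=\hat r(0)=1$, the $L^2\to L^2$ norm of $T_{E-\chi_j}^{(j)}$ is at most $1$. In each part the dependence on $\chi$ enters only through a harmless shift and the dependence on $E\in\Sigma_0$ is vacuous, so the stated uniformity is automatic.

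For (d) I would Taylor expand $\hat r$ at the origin. Since $r$ is a bounded compactly supported probability density it is non-degenerate, so $\hat r(0)=1$ and $|\hat r(k)|^2=1-\sigma^2k^2+O(k^4)$ with $\sigma^2=\mathrm{Var}(r\,dx)>0$; hence $|\hat r(k)|^2\le1-\tfrac{\sigma^2}{2}k^2\le e^{-\sigma^2k^2/2}$ for $|k|\le k_0$. On $\{|k|\ge k_0\}$ the function $|\hat r|$ is continuous, is strictly below $1$ (equality would force $e^{-ikx}$ to be a.e.\ constant on $\{r>0\}$), and decays to $0$ by Riemann--Lebesgue, so $\sup_{|k|\ge k_0}|\hat r(k)|^2=1-\eta$ for some $\eta\in(0,1)$; taking $\lambda>0$ small enough that $\tfrac{\sigma^2}{2}\lambda^2<\eta$ gives $\sup_{|k|\ge\lambda}|\hat r(k)|^2\le e^{-c\lambda^2}$ with $c=\sigma^2/2$.

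The heart of the lemma is (e), and this is where I expect the main difficulty. Using (c) and canceling the unitaries at the two ends, $\|T_{E-\chi_j}^{(j)}T_{E-\chi_{j-1}}^{(j-1)}\|_{2,2}=\|A_{r_j}\,W\,A_{r_{j-1}}\|_{2,2}$ for a unitary $W=\mathcal U\,\mathcal R_{E-\chi_{j-1}}$. On the Fourier side $A_{r_j}$ and $A_{r_{j-1}}$ are multiplication by $\hat r(a_jk)$ and $\hat r(a_{j-1}k)$, whose moduli equal $1$ only at $k=0$ and are $\le\sup_{|\kappa|\ge K_0}|\hat r(\kappa)|$ once $|k|\ge K_0/a_j$, respectively $|k|\ge K_0/a_{j-1}$. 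A single $A_{r_j}$ or $A_{r_{j-1}}$ is thus a contraction with no strict gain, but the product cannot be close to an isometry: the only way to avoid a loss at $A_{r_{j-1}}$ is to feed it a $g$ with $\hat g$ essentially supported near the origin; such a $g$ varies slowly in position, and the inversion $\mathcal U$ sitting inside $W$ then pushes $A_{r_{j-1}}g$ toward the origin in position, which by an uncertainty-type estimate forces its Fourier transform to spread out, causing a loss at $A_{r_j}$. Quantifying this alternative is precisely Simon's geometric lemma in \cite{S82}; it produces a fixed fraction (the constant $\tfrac{1}{16}$) of the $L^2$-mass that gets multiplied by $\hat r$ evaluated at frequencies $\ge K_0\min\{a_j,a_{j-1}\}$, the minimum appearing because the gain is throttled by the less spread-out of the two densities. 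I would import this step from \cite{S82} essentially verbatim; the only effect of the background potential is to replace $\mathcal R_E$ by $\mathcal R_{E-\chi_j}$, which changes no operator norm. The outcome is $\|T_{E-\chi_j}^{(j)}T_{E-\chi_{j-1}}^{(j-1)}\|_{2,2}^2\le\tfrac{15}{16}+\tfrac{1}{16}\sup_{|k|\ge K_0\min\{a_j,a_{j-1}\}}|\hat r(k)|^2$.

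Finally, the exponential bound in (e) follows by feeding (d) into this estimate. Writing $\tfrac{15}{16}+\tfrac{1}{16}s=1-\tfrac{1}{16}(1-s)$ and using $1-u\le e^{-u}$: if $K_0\min\{a_j,a_{j-1}\}$ lies below the threshold of (d), then $s\le e^{-c(K_0\min\{a_j,a_{j-1}\})^2}$, hence $1-s\ge\tfrac{c}{2}K_0^2\min\{a_j^2,a_{j-1}^2\}$ and, after a square root, the bound is $\le e^{-c'K_0^2\min\{a_j^2,a_{j-1}^2\}}$; if instead it lies above that threshold, then $s\le1-\eta$ is bounded away from $1$ and one gets a fixed gain $e^{-\eta/32}$, which I absorb into the stated form by choosing $\lambda=\lambda(r)>0$ so small that $c'K_0^2\lambda\le\eta/32$ --- which is exactly why the exponent carries the cap $\min\{a_j^2,a_{j-1}^2,\lambda\}$. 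Renaming $c'$ to $c$ then completes the proof.
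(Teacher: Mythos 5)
Your proposal is correct, and in fact supplies considerably more detail than the paper itself, which simply refers the reader to \cite{S82} for this lemma. Your handling of (a) by Tonelli, (b) by interpolating $L^1\to L^1$ with $L^1\to L^\infty$ (Riesz--Thorin), and (c) by exhibiting $T_E^{(j)}$ as unitary $\circ$ convolution $\circ$ unitary with multiplier $\hat r(a_jk)$ of sup-norm $1$ are all exactly right, and they are the standard arguments one finds in \cite{S82} and \cite{CFKS}. Part (d) is also fine: the non-degeneracy of $r$ gives $\sigma^2>0$ near the origin, the absolute continuity and compact support of $r$ rule out $|\hat r(k)|=1$ for $k\ne 0$, and Riemann--Lebesgue plus continuity give a uniform gap away from the origin; your choice of $\lambda$ then reconciles the two regimes. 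For (e) you correctly identify that the first displayed inequality is Simon's geometric lemma, which both you and the paper import from \cite{S82}; the only adaptation needed is the replacement of $\mathcal{R}_E$ by $\mathcal{R}_{E-\chi_j}$, and, as you note, this is a unitary conjugation that does not alter any operator norm, so the background potential is truly ``free'' at this step. Your derivation of the exponential bound from the $\frac{15}{16}+\frac{1}{16}s$ bound and part (d) is also correct, and your explanation of why the cap $\lambda$ appears inside the $\min$ (to absorb the regime where $K_0\min\{a_j,a_{j-1}\}$ exceeds the threshold of (d), where one only gets a fixed gain) is exactly the point. In short: same ultimate strategy as the paper (deferring the hard geometric estimate to \cite{S82}) but with all the surrounding computations written out.

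One small remark for precision: in (d), when you pass from $|\hat r(k)|^2\le 1-\tfrac{\sigma^2}{2}\lambda^2$ on $[\lambda,k_0]$ to a bound valid on all of $\{|k|\ge\lambda\}$, you should make explicit that you are comparing with $\sup_{|k|\ge k_0}|\hat r(k)|^2=1-\eta$ and choosing $\lambda$ so small that $e^{-\sigma^2\lambda^2/2}\ge 1-\eta$; your phrasing ``$\tfrac{\sigma^2}{2}\lambda^2<\eta$'' is a sufficient (and correct) version of this since $-\log(1-\eta)\ge\eta$, but stating the comparison of the two suprema would make the logic clearer.
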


\begin{proof}[Proof of Theorem~\ref{t.bs}.]
By Proposition~\ref{p.asppcriterion} it suffices to show that
\begin{equation}\label{e.summability}
\sum_{n \in \Z} |a(n,m)| < \infty
\end{equation}
for $m = 0,1$. Consider the case $m = 0$ and $n > 0$. Let $k = \lfloor \frac{n-1}{2} \rfloor$. Then it follows from our considerations above that
\begin{align*}
|a(n,0)| & \le \mathrm{Leb}(\Sigma_0) \liminf_{L \to\infty} \sup_{E \in \Sigma_0, \, (\xi,\chi) \in \mathrm{supp} \, \mu} \Big| \Big\langle T_{E - \chi_1}^{(1)} \cdots T_{E - \chi_{n-1}}^{(n-1)} S_{E - \chi_{n}}^{(n)} \cdots S_{E - \chi_{L-1}}^{(L-1)} \phi_r , \\
& \qquad \qquad \qquad \qquad \qquad \qquad \qquad \qquad U S_{E - \chi_{0}}^{(0)} \cdots S_{E - \chi_{-L+1}}^{(-L+1)} \phi_\ell \Big\rangle_{L^2(\R)} \Big| \\
& \le \mathrm{Leb}(\Sigma_0) \, a_n^{-1/2} \|r\|_\infty^{1/2} e^{-c K_0^2 \sum_{j = 1}^k \min \{ a_{2j}^2, a_{2j-1}^2, \lambda \} } a_0^{-1/2} \|r\|_\infty^{1/2}.
\end{align*}
Here we used that $\|\phi_r\|_1 = \|\phi_\ell\|_1 = 1$ and that $U : L^2(\R) \to L^2(\R)$ is unitary, along with the estimates in Lemma~\ref{l.ks4}. The constant $\lambda = \lambda(r) > 0$ is chosen small enough so that we can apply the estimate from Lemma~\ref{l.ks4}.(e).

The estimate for $|a(n,m)|$ when $n < 0$ or $m = 1$ follows in a similar way. By our assumption \eqref{e.summabilityass} (with $d = c K_0^2$), the summability statement \eqref{e.summability} follows.
\end{proof}

\begin{proof}[Proof of Theorem~\ref{t.general}.]
Again we freeze the random variables $\{ \chi_n \}_{n \in \Z}$ and treat them as a background potential. But we will freeze additional variables as well. Recall that the potential is of the form
\begin{align*}
V(n) & = \sum_{k \ge |m(n)|} \xi_{n,k} + \chi_{n} + \sum_{k=0}^{|m(n)| - 1} \mathcal{L}_{n, k}(\{\xi_{s,k}\}, |m(s)|<|m(n)|) \\
& = \xi_{n,|m(n)|} + \sum_{k > |m(n)|} \xi_{n,k} + \chi_{n} + \sum_{k=0}^{|m(n)| - 1} \mathcal{L}_{n, k}(\{\xi_{s,k}\}, |m(s)|<|m(n)|)
\end{align*}
The background potential will be given by
$$
\sum_{k > |m(n)|} \xi_{n,k} + \chi_{n},
$$
while the primary random variables are $\xi_{n,|m(n)|}$. As above, we have the additional term
$$
\sum_{k=0}^{|m(n)| - 1} \mathcal{L}_{n, k}(\{\xi_{s,k}\}, |m(s)|<|m(n)|)
$$
that introduces some correlations between sites.

As above we consider the quantity $\rho_L(n,m)$, which is defined in terms of the finite-volume operators $H_\zeta^{(L)}$ as in \eqref{e.rhoLdef}. The analogs of Proposition~\ref{p.asppcriterion} and Lemma~\ref{l.ks.est1} are proved in the same way.

The quantity $\rho_L(n,m)$ is again viewed as an integral over the variables we freeze and we wish to prove estimates for the integrand, which may now be denoted by $\rho_L(n,m; \{ \xi_{\cdot,k} \}_{k > |m(\cdot)|}, \chi)$, that are uniform in the frozen variables and hence extend to estimates for $\rho_L(n,m)$.

The quantity $\rho_L(n,m; \{ \xi_{\cdot,k} \}_{k > |m(\cdot)|}, \chi)$ is studied in the same way as before. The change of variables used above transforms it (for, say, $m = 0$ and $n > 0$) into an expression like \eqref{e.ksest3}, which can again be estimated with the help of Lemma~\ref{l.ks4}. Using the assumptions of the present theorem, we obtain in this way the estimate
$$
|a(n,0)| \le \mathrm{Leb}(\Sigma_0) \varepsilon_{|m(n)|}^{-1/2} \|r\|_\infty^{1/2} e^{-c K_0^2 \sum_{j = 1}^{\lfloor \frac{n-1}{2} \rfloor} \min \{ \varepsilon_{|m(2j)|}^2, \varepsilon_{|m(2j-1)|}^2, \lambda \} } \varepsilon_0^{-1/2} \|r\|_\infty^{1/2}.
$$
A similar estimate can be shown for $|a(n,0)|$ when $n < 0$, and also for $|a(n,1)|$.

By our assumptions \eqref{e.epsilonsummable}, we have $\varepsilon_k \to 0$ as $k \to \infty$, and hence
$$
\min \{ \varepsilon_{|m((\mathrm{sgn} n) 2j)|}^2, \varepsilon_{|m((\mathrm{sgn} n) (2j-1))|}^2, \lambda \} = \min \{ \varepsilon_{|m((\mathrm{sgn} n) 2j)|}^2, \varepsilon_{|m((\mathrm{sgn} n) (2j-1))|}^2 \}
$$
for $|j|$ sufficiently large. Therefore, by \eqref{e.generalthmsummability} it follows that these upper bounds are summable and hence Proposition~\ref{p.asppcriterion} is applicable. The result follows.
\end{proof}

\section{Applications to Limit-Periodic Potentials}\label{sec.lp}

In this section we prove Theorems~\ref{t.main1}--\ref{t.main2}. It is clear that Theorem~\ref{t.main1} follows immediately from Theorem~\ref{t.main2}, so let us discuss the proof of Theorem~\ref{t.main2}.

First let us formulate a simple statement that is to be used to choose the parameters of the construction.

\begin{prop}\label{p.sequences1}
Given constants $\varepsilon > 0$ and $\gamma > 0$, there is a decreasing  sequence $\{\varepsilon_k\}_{k \ge 0}$ of positive numbers and an increasing sequence $\{n_k\}_{k \ge 0}$ of natural numbers such that
\begin{enumerate}
\item $\sum_{k=0}^\infty \varepsilon_k < \varepsilon$;
\item $2n_k+1$ is a non-trivial multiple of $2n_{k-1}+1$ for each $k\ge 1$;
\item $\sum_{k=2}^\infty \varepsilon_k^{-5/2}e^{-\gamma n_{k-1}\varepsilon_k^2}<\infty$.
\end{enumerate}
\end{prop}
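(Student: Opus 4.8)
The plan is to build the two sequences by a greedy recursive construction, choosing $\varepsilon_k$ first and then $n_k$ large enough to absorb the resulting exponential. Concretely, having fixed the target $\varepsilon>0$ and the constant $\gamma>0$, I would pick any summable sequence $\{\varepsilon_k\}_{k\ge0}$ of positive reals with $\sum_{k\ge0}\varepsilon_k<\varepsilon$ that is strictly decreasing and tends to $0$ — for instance $\varepsilon_k=c\,2^{-k}$ with $c$ chosen so the sum is below $\varepsilon$; this takes care of item (1) immediately and does not interact with the rest, so there is genuine freedom here. The only constraint the $\varepsilon_k$'s must satisfy for later purposes is that $\varepsilon_k\to0$, which any such choice provides.

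Next I would construct $\{n_k\}_{k\ge0}$ recursively. Set $n_0=1$ (or any natural number). Given $n_{k-1}$, note that requiring $2n_k+1$ to be a nontrivial multiple of $2n_{k-1}+1$ means $2n_k+1=(2j+1)(2n_{k-1}+1)$ for some integer $j\ge1$ — since the product of two odd numbers is odd, every odd multiple of $2n_{k-1}+1$ is again of the form $2n+1$, so the admissible values of $n_k$ are exactly $n_k=j(2n_{k-1}+1)+n_{k-1}$ for $j\ge1$, an arithmetic progression that marches off to infinity. Thus for each $k$ I can choose $j=j_k$ as large as I like, and hence $n_k$ as large as I like subject to this divisibility constraint. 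This handles item (2) and the monotonicity of $\{n_k\}$ for free.

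It remains to arrange item (3): $\sum_{k\ge2}\varepsilon_k^{-5/2}e^{-\gamma n_{k-1}\varepsilon_k^2}<\infty$. The key observation is that the $k$-th summand depends on $n_{k-1}$, which is already determined when we choose $n_k$ — wait, more carefully, the summand indexed by $k$ involves $n_{k-1}$ and $\varepsilon_k$, both of which are known before we pick $n_k$. So I would instead organize the recursion so that when choosing $n_{k}$ I ensure the $(k+1)$-st term is small: having already fixed $\varepsilon_0,\varepsilon_1,\dots$ (all of them, since the $\varepsilon$'s were chosen first) and $n_0,\dots,n_{k-1}$, I choose $n_k$ to be the smallest admissible value (of the form above) that is large enough that
\[
\varepsilon_{k+1}^{-5/2}\,e^{-\gamma n_k \varepsilon_{k+1}^2}\le 2^{-k}.
\]
This is possible because, with $\varepsilon_{k+1}>0$ fixed, the left-hand side tends to $0$ as $n_k\to\infty$, and the admissible $n_k$ form an unbounded set. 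With this choice the tail $\sum_{k\ge2}$ of the series in (3) is dominated by a geometric series, so it converges. Reindexing, the term for index $k\ge2$ is controlled by the choice made at stage $k-1$, so everything is consistent.

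The construction has no real obstacle; the only point requiring a moment's care is the bookkeeping that the quantity appearing in the $k$-th summand of (3) depends only on data ($\varepsilon_k$ and $n_{k-1}$) fixed strictly before $n_k$ is selected, so that a straightforward greedy choice of each $n_k$ (large enough to kill the next term of the series) is legitimate and does not create a circular dependency. One should also double-check the elementary fact that odd multiples of an odd number are precisely the numbers $2n+1$ with $n$ in the stated arithmetic progression, so that item (2) can indeed be met simultaneously with making $n_k$ arbitrarily large — but this is immediate.
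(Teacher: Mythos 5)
Your proposal is correct and takes essentially the same route as the paper's proof: fix a decreasing summable sequence $\{\varepsilon_k\}$ with sum below $\varepsilon$, then greedily pick each $n_k$ from the arithmetic progression of admissible values (those making $2n_k+1$ a nontrivial odd multiple of $2n_{k-1}+1$), large enough to force the corresponding term of the series in item (3) below $2^{-k}$. The paper states this more tersely, but the mechanism is identical.
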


\begin{proof}[Proof of Proposition \ref{p.sequences1}]
 Choose any decreasing sequence $\{\varepsilon_k\}_{k\ge 0}$ with $\sum_{k=0}^\infty \varepsilon_k < \varepsilon$. One can choose a sequence $\{n_k\}_{k\ge 0}$ in such a way that $2n_{k}+1$ is a multiple of $2n_{k-1}+1$, and such that
 $\varepsilon_k^{-5/2}e^{-\gamma n_{k-1}\varepsilon_k^2}<\frac{1}{2^k}.$
Now all the required properties must be satisfied.
\end{proof}

\begin{proof}[Proof of Theorem~\ref{t.main2}.]
Suppose a bounded $V : \Z \to \R$ and $\varepsilon > 0$ are given. Denote the uniform distribution on $[0,1]$ by $r(x) \, dx$ (i.e., $r(x) = \chi_{[0,1]}(x)$). Let  $\gamma$ be determined by $\gamma=\frac{d(r)}{2}$. 


With $\varepsilon$ and $\gamma$ as above, Proposition~\ref{p.sequences1} yields sequences $\{\varepsilon_k\}_{k \ge 0}$ and $\{n_k\}_{k \ge 0}$ with the properties listed there. Define the intervals $I_m$, $m \in \Z$ by
$$
I_0 = [-n_0,n_0], \quad I_m = -I_{-m} = (n_{m-1} , n_m] , \, m \ge 1.
$$
As in the paragraph preceding the statement of Theorem~\ref{t.general}, this defines a map $n \mapsto m(n)$ via $n \in I_{m(n)}$.

Set $r_k(x) = \varepsilon_k^{-1}r(\varepsilon^{-1}_k x)$, $k \ge 0$, and consider independent random variables $\{\xi_{n,k}\}_{n\in \mathbb{Z}, k \ge |m(n)|}$, where $\xi_{n,k}$ is distributed with respect to $r_k(x) \, dx$.

The functionals $\mathcal{L}_{n,k}:\{ (\xi_{s,k})_{|m(s)|<|m(n)|} \}\to \mathbb{R}$ are defined by
$$
\mathcal{L}_{n,k} \left( ( \xi_{s,k} )_{|m(s)| < |m(n)|} \right) = \xi_{(n + n_k) \!\!\!\! \mod (2n_k+1) - n_k}
$$
and the random variables $\chi_n$ are given by
$$
\chi_n = V(n),
$$
that is, they are actually non-random in our scenario (which is permitted in Theorem~\ref{t.general}).

Then,
$V_\mathrm{lp}$ defined by
$$
V_\mathrm{lp}(n) = \sum_{k \ge|m(n)|} \xi_{n,k} + \sum_{k=0}^{|m(n)|-1} \mathcal{L}_{n, k}((\xi_{s,k})_{|m(s)|<|m(n)|})
$$
is limit-periodic and obeys $\|V_\mathrm{lp}\|_\infty < \varepsilon$.

We now apply Theorem~\ref{t.general} with the choices specified above. Since $\{\varepsilon_k\}$ is decreasing, and due to the symmetry of the constructed perturbation,  to apply Theorem~\ref{t.general} we need to check that
\begin{equation}\label{e.condi}
\sum_{n\ge 0}\varepsilon_{m(n)}^{-1/2}e^{-d\sum_{j=1}^{{\lfloor \frac{n-1}{2} \rfloor}}\varepsilon^2_{m(2j)}}<\infty.
\end{equation}
We have
\begin{align*}
\sum_{n = n_{k-1}+1}^{n_k} \varepsilon_{m(n)}^{-1/2} e^{-d\sum_{j=1}^{{\lfloor \frac{n-1}{2} \rfloor}} \varepsilon^2_{m(2j)}} & \le C \varepsilon_k^{-1/2} \sum_{n=n_{k-1}+1}^{n_k} e^{-d\frac{n}{2}\varepsilon_k^2} \\
& < C \varepsilon_k^{-1/2} \frac{e^{-d\frac{n_{k-1}}{2} \varepsilon_k^2}}{1-e^{-\frac{d}{2}\varepsilon_k^2}} \\
& < C' \varepsilon_k^{-5/2} e^{-\gamma{n_{k-1}}\varepsilon_k^2}.
\end{align*}
Since $\sum_{k=2}^\infty \varepsilon_k^{-5/2}e^{-\gamma n_{k-1}\varepsilon_k^2}<\infty$, the condition \eqref{e.condi} follows. Thus, almost surely, the Schr\"odinger operator with potential
$$
V(n) + V_\mathrm{lp}(n) = \sum_{k \ge|m(n)|} \xi_{n,k} + \chi_n + \sum_{k=0}^{|m(n)|-1} \mathcal{L}_{n, k}((\xi_{s,k})_{|m(s)|<|m(n)|})
$$
has pure point spectrum, and this completes the proof of Theorem~\ref{t.main2}.
\end{proof}

\section{Applications to Quasi-Periodic Potentials}\label{sec.qp}

In this section we prove Theorems~\ref{t.main4}, \ref{t.main5}, and \ref{t.main6}. Obviously, Theorem~\ref{t.main4} is a special case of Theorem~\ref{t.main5}, so let us prove the latter theorem.

\begin{proof}[Proof of Theorem~\ref{t.main5}.]
Suppose $\Omega$ is a compact metric space and $T : \Omega \to \Omega$ is invertible and has an infinite orbit. Given a continuous $f:\Omega\to \mathbb{R}$, set $\chi_n=f(T^n(\omega))$ (formally speaking, $\chi_n$ is the Dirac mass located at $f(T^n(\omega))$). Fix an arbitrarily small $\varepsilon > 0$. We need to find a continuous $g : \Omega \to \mathbb{R}$ such that $\|g - f\|_{\infty} < \varepsilon$ and such that the discrete Schr\"odinger operator with potential $V(n) = g(T^n(\omega))$ has pure point spectrum. This will imply Theorem \ref{t.main5}.

Take a sequence $\{a_n\}_{n\in \mathbb{Z}}$ of the form $a_n=\frac{\varepsilon}{100}(1+|n|)^\alpha$ with some  $\alpha\in (0,1/2)$. Let us partition $\mathbb{Z}$ into intervals $[n_{i-1}+1, n_i]$, $i \in \mathbb{Z}$ in such a way that for any sequence of indices $m_i \in \mathbb{Z}$ with $m_i \in [n_{i-1}+1, n_{i}]$, we have $\sum_{i \in \mathbb{Z}} a_{m_i} < \varepsilon$.

Choose neighborhoods $\{B_n(T^n\omega)\}_{n\in \mathbb{Z}}$ in such a way that for each $i \in \mathbb{Z}$, the neighborhoods $B_{n_{i-1}+1}, \ldots, B_{n_{i}}$ are disjoint, and besides $T^j(\omega) \not \in B_n(T^n\omega)$ for any $|j|<|n|$. Also, choose a sequence of continuous functions $g_n:\Omega\to \mathbb{R}, n\in \mathbb{Z},$ in such a way that $g_n\ge 0$, $g_n|_{\Omega\backslash B_n}\equiv 0$, and $\max g_n=g_n(T^n\omega)=a_n$. Notice that for any sequence $\{\mu_n\}_{n \in \mathbb{Z}}$ with $\mu \in [0,1]$, the series $\sum_{n \in \mathbb{Z}} \mu_n g_n$ converges uniformly and hence defines a continuous real-valued function on $\Omega$ of norm not greater than $\varepsilon$.

Let $\{\xi_n\}_{n\in \mathbb{Z}}$ be a family of independent random variables that are uniformly distributed on $[0,1]$. Consider the random function $g : \Omega \to \mathbb{R}$, $g = f + \sum_{n \in \mathbb{Z}} \xi_n g_n$. A direct application of Theorem~\ref{t.bs} implies that almost surely the discrete Schr\"odinger operator with potential $V(n) = g(T^n\omega)$ has pure point spectrum.
\end{proof}

Let us now prove Theorem~\ref{t.main6}.

\begin{proof}[Proof of Theorem~\ref{t.main6}.]
Without loss of generality we can take $\omega=0\in \mathbb{T}$. Consider the orbit $\{z_n\}$, $z_n=n\alpha\, (\text{mod}\ 1)$. Well-known properties of the continued fraction approximants $\left\{\frac{p_k}{q_k}\right\}$ imply that for any $n_1, n_2$ with $|2n_1|, |2n_2| < q_{k+1}$, one has
\begin{equation}\label{e.gapbound}
\frac{1}{2q_{k+1}}<\frac{1}{q_k+q_{k+1}}< |z_{n_1}-z_{n_2}|.
\end{equation}

Fix $\gamma \in (0, 1/2)$ and take any $\tilde \gamma \in (\gamma, 1/2)$. For any $z \in \mathbb{T}$ and $\varepsilon > 0$, consider the function $g : \mathbb{T} \to \mathbb{R}$ defined by
\begin{equation}\label{e.formofg}
g(x)=\left\{
       \begin{array}{ll}
         \varepsilon^{\tilde\gamma-1}(z+\varepsilon-x), & \hbox{for \ $x\in [z, z+\varepsilon]$;} \\
         \varepsilon^{\tilde\gamma-1}(x+\varepsilon-z), & \hbox{for \ $x\in [z-\varepsilon, z]$;} \\
         0, & \hbox{for \  $x\not\in [z-\varepsilon, z+\varepsilon]$.}
       \end{array}
     \right.
\end{equation}
Notice that $g(z)=\varepsilon^{\tilde \gamma}$, and that $g$ is H\"older continuous with H\"older exponent $\tilde \gamma$ and constant one.

Let $g_n$ be the function of the form (\ref{e.formofg}) for the parameters $z=z_n$ and $\varepsilon=\frac{1}{100q_{k+1}}$, where $|2n| \in [q_k, q_{k+1})$. Then due to \eqref{e.gapbound}, if $|2n|, |2m| \in [q_k, q_{k+1})$, the supports of the functions $g_n$ and $g_m$ do not intersect. Therefore, for any $x \in \mathbb{T}$,
$$
\sum_{n \in \mathbb{Z}} g_n(x) < \sum_{k=1}^\infty \frac{2}{(100q_{k+1})^{\tilde \gamma}} < \infty.
$$
Hence for any sequence $\{ \mu_n \}_{n \in \mathbb{Z}}$, $\mu_n \in [0,1]$, the series $\sum_{n \in \mathbb{Z}} \mu_n g_n$ converges uniformly. We claim that it converges to a H\"older function\footnote{This can be seen from general principles, due to a relation between the modulus of continuity of a function and its degree of approximation by Lipschitz functions; see, e.g., \cite[Proposition 1.10]{BL}, but in our particular case a direct calculation seems to be a shorter path.} with H\"older exponent $\gamma$. Indeed, denote $G = \sum_{n \in \mathbb{Z}} \mu_n g_n$, and take any two points $x, y \in \mathbb{T}$. For some $k \in \mathbb{N}$, we have
$$
\frac{1}{q_{k+1}} \le |x-y| < \frac{1}{q_k}.
$$
Notice that
\begin{align*}
|G(x) - G(y)| & \le \sum_{n \in \mathbb{Z}} \mu_n |g_n(x) - g_n(y)| \\
& \le \sum_{n \in [-q_k/2, q_k/2]} |g_n(x) - g_n(y)| + \sum_{|n| > q_k/2} |g_n(x) - g_n(y)| \\
& \le |x-y| \left( \sum_{s=1}^k \frac{(100 q_s)^{-\tilde \gamma}}{(100q_s)^{-1}} \right) + \sum_{s = k+1}^\infty(100q_s)^{-\tilde \gamma} \\
& \le |x-y| C_1 \cdot q_k^{1 - \tilde \gamma} + C_2 q_{k+1}^{-\tilde \gamma}.
\end{align*}
We need to show that if $\gamma > \tilde \gamma$, then
\begin{equation}\label{e.holder}
|G(x) - G(y)| \le |x-y|^\gamma
\end{equation}
for sufficiently close $x$ and $y$. Notice that if $|x - y| = \frac{1}{q_{k+1}}$, then the inequality
\begin{equation}\label{e.testineq}
|x-y| C_1 \cdot q_k^{1 - \tilde \gamma} + C_2 q_{k+1}^{-\tilde \gamma} \le |x-y|^\gamma
\end{equation}
is equivalent to
$$
\frac{1}{q_{k+1}} C_1 \cdot q_k^{1 - \tilde \gamma} + C_2 q_{k+1}^{-\tilde \gamma} \le \frac{1}{q_{k+1}^\gamma},
$$
or
$$
C_1 q_{k+1}^{\gamma - 1} q_k^{1 - \tilde \gamma} + C_2 q_{k+1}^{\gamma - \tilde \gamma} \le 1.
$$
Since $0 < \gamma < \tilde \gamma$, this holds for all large enough $k \in \mathbb{N}$.

On the other hand, if $|x - y| = \frac{1}{q_k}$, then \eqref{e.testineq} is equivalent to
$$
\frac{1}{q_{k}} C_1 \cdot q_k^{1 - \tilde \gamma} + C_2 q_{k+1}^{-\tilde \gamma} \le \frac{1}{q_{k}^\gamma},
$$
or
$$
C_1 q_k^{\gamma - \tilde \gamma} + C_2 q_{k+1}^{-\tilde \gamma} q_k^{\gamma} \le 1.
$$
Once again, since $0 < \gamma < \tilde \gamma$, this holds for all large enough $k \in \mathbb{N}$.

Finally, since the function $t \mapsto t^\gamma$ is concave on the interval $t \in \left[ \frac{1}{q_{k+1}}, \frac{1}{q_k} \right]$, the inequality \eqref{e.testineq} (and hence \eqref{e.holder}) holds for all $x, y \in \mathbb{T}$ with $|x - y| \in \left[ \frac{1}{q_{k+1}}, \frac{1}{q_k} \right]$ for all sufficiently large $k \in \mathbb{N}$, and hence for all sufficiently close $x, y$.

Let $\{ \xi_n \}_{n \in \mathbb{Z}}$ be a family of independent random variables with a distribution whose density is given by some function $r(x)$, and let $f : \mathbb{T} \to \mathbb{R}$ be a H\"older continuous function, $f \in C^{\gamma}(\mathbb{T}, \mathbb{R})$. Then the random function $g : \mathbb{T} \to \mathbb{R}$, $g = f + \sum_{n \in \mathbb{Z}} \xi_n g_n$ also belongs to $C^{\gamma}(\mathbb{T}, \mathbb{R})$. In order to apply Theorem~\ref{t.bs}, we need to check that our condition \eqref{e.condkappa} implies the condition from Theorem~\ref{t.bs}. Set $\chi_n = f(z_n)$. We need to ensure that
\begin{equation}\label{e.111}
\sum_{n \in \mathbb{N}} a_n^{-1/2} \exp \left( -\delta \sum_{s=1}^n |a_s|^2 \right) < \infty
\end{equation}
and
\begin{equation}\label{e.222}
\sum_{n \le 0} a_n^{-1/2} \exp \left(-\delta \sum_{s=n}^0 |a_s|^2 \right) < \infty,
\end{equation}
where $a_n = \frac{1}{(100q_{k+1})^{\tilde \gamma}}$, if $|2n| \in [q_k, q_{k+1})$. Notice that $\delta$ here depends on the density $r$, and below we will choose $r$ to make $\delta$ sufficiently large. We have
$$
\sum_{s = q_k/2}^{q_{k+1}/2} a_s^2 \approx \frac{1}{10000q_{k+1}^{2\tilde \gamma}} \left( \frac{1}{2}q_{k+1} - \frac{1}{2}q_{k} \right) \approx q_{k+1}^{1 - 2 \tilde \gamma}.
$$
Therefore,
\begin{align*}
\sum_{n \in \mathbb{N}} & a_n^{-1/2} \exp \left( -\delta \sum_{s=1}^n |a_s|^2 \right) = \sum_{k=1}^\infty \sum_{n=q_k/2}^{q_{k+1}/2} a_n^{-1/2} \exp \left( -\delta \left[ \sum_{s=1}^{q_k/2-1}a_s^2 \right] - \delta \left[ \sum_{s=q_k/2}^n a_s^2 \right] \right) \\
& = \sum_{k=1}^\infty \exp \left( -\delta \sum_{s=1}^{q_k/2-1} a_s^2 \right) \cdot (100q_{k+1})^{\tilde \gamma/2} \sum_{n=q_k/2}^{q_{k+1}/2} \exp \left( -\delta (n-\frac{1}{2}q_k)(100q_{k+1})^{2\tilde \gamma} \right) \\
& \le C \sum_{k=1}^\infty (100q_{k+1})^{\tilde \gamma/2} \exp \left( -\delta \sum_{s=1}^{q_k/2-1} a_s^2 \right) \\
& \le C\sum_{k=1}^\infty(100q_{k+1})^{\tilde \gamma/2}\exp\left(-\tilde \delta q_k^{1-2\tilde\gamma}\right) < C'\sum_{k=1}^\infty q_{k+1}^{\tilde\gamma/2}\exp\left(-\kappa q_k^{1-2\tilde\gamma}\right) < \infty,
\end{align*}
due to \eqref{e.condkappa} and the fact that the coefficient $\delta > 0$ in \eqref{e.111} and \eqref{e.222} depends on the Fourier transform $\hat r$ (see Remark~\ref{r.hatr}), and by appropriate choice of the density $r$, it can be made as large as needed. This shows \eqref{e.111}. The proof of \eqref{e.222} is completely similar.
\end{proof}

\section*{Acknowledgment}

We are grateful to Valmir Bucaj, Victor Chulaevsky and Jake Fillman for useful discussions.

\end{document}